\documentclass[reqno,11pt]{amsart}

\usepackage{amsmath,amssymb,amsthm,mathtools}
\usepackage{mathrsfs}       
\usepackage{hyperref}
\usepackage{xcolor}         
\usepackage{esint}
\usepackage{tikz}           
\usepackage{verbatim}       
\usepackage{setspace}       
\usepackage[normalem]{ulem}
\usepackage{cancel}

\title[Lipschitz regularity for the parabolic $(s,p)-$obstacle problem]{Lipschitz regularity for the parabolic $(s,p)-$obstacle problem}

\author[D. Jesus]{David Jesus}
\address{Applied Mathematics and Computational Sciences (AMCS), Computer, Electrical and Mathematical Sciences and Engineering Division (CEMSE), King Abdullah University of Science and Technology (KAUST), Thuwal, 23955-6900, Kingdom of Saudi Arabia}
\email{david.dejesus@kaust.edu.sa}

\author[A. Sobral]{Aelson Sobral}
\address{Applied Mathematics and Computational Sciences (AMCS), Computer, Electrical and Mathematical Sciences and Engineering Division (CEMSE), King Abdullah University of Science and Technology (KAUST), Thuwal, 23955-6900, Kingdom of Saudi Arabia}
\email{aelson.sobral@kaust.edu.sa}

\author[J.M. Urbano]{Jos\'{e} Miguel Urbano}
\address{Applied Mathematics and Computational Sciences (AMCS), Computer, Electrical and Mathematical Sciences and Engineering Division (CEMSE), King Abdullah University of Science and Technology (KAUST), Thuwal, 23955-6900, Kingdom of Saudi Arabia and CMUC, Department of Mathematics, University of Coimbra, 3000-143 Coimbra, Portugal}
\email{miguel.urbano@kaust.edu.sa}

\newtheorem{theorem}{Theorem}[section]
\newtheorem{lemma}{Lemma}[section]

\newtheorem{proposition}{Proposition}[section]
\newtheorem{definition}{Definition}[section]

\newcommand{\Tail}{\operatorname{Tail}}

\newcommand{\abs}[1]{\left|#1\right|}

\newcommand{\R}{\mathbb{R}}

\newcommand{\p}{\partial}
\newcommand{\LL}{\mathcal{L}}
\newcommand{\II}{\mathcal{I}}

\newcommand{\dd}{\, \mathrm{d}}

\newcommand{\C}{\mathcal{C}}

\newcommand{\intav}[1]{\mathchoice 
  {\mathop{\vrule width 6pt height 3 pt depth -2.5pt \kern -8pt \intop}\nolimits_{\kern -6pt#1}} 
  {\mathop{\vrule width 5pt height 3 pt depth -2.6pt \kern -6pt \intop}\nolimits_{#1}}
  {\mathop{\vrule width 5pt height 3 pt depth -2.6pt \kern -6pt \intop}\nolimits_{#1}}
  {\mathop{\vrule width 5pt height 3 pt depth -2.6pt \kern -6pt \intop}\nolimits_{#1}}}

\numberwithin{equation}{section}
\begin{document}

\subjclass[2020]{Primary 35R35, 35B65. Secondary 35R09, 35K92, 35D40}






\keywords{Fractional $p-$caloric, Obstacle problem, Lipschitz regularity}

\begin{abstract} 
We study the obstacle problem for the parabolic fractional \(p-\)Laplace equation  \[\partial_t u+(-\Delta_p)^su = 0\] in the degenerate range \(2<p<2/(1-s)\). We prove that viscosity solutions are locally Lipschitz continuous in space and H\"older continuous in time. If, in addition, \(p>1/(1-s)\), the time regularity improves to Lipschitz continuity.
\end{abstract}  

\date{\today}

\maketitle

\section{Introduction}

The obstacle problem is arguably one of the most emblematic free boundary problems, with applications ranging from the equilibrium of elastic membranes constrained by an obstacle and Signorini-type contact problems, to Stefan-type phase transitions, dam/filtration problems in porous media, and optimal stopping problems in mathematical finance such as American option pricing; see, for instance, \cite{Rodrigues1987} or \cite[Section~3]{RosOton2018}.

In this note, we study for the first time the parabolic obstacle problem driven by the fractional \(p-\)Laplacian. For \(Q_1=B_1\times(-1,0]\), \(s\in(0,1)\), and \(p>2\), given an obstacle \(\varphi\colon Q_1\to\mathbb{R}\), we consider functions \(u\) satisfying
\begin{equation}\label{eq:parabolic-obstacle-problem}
    \min\left\{u-\varphi,\,\partial_t u+(-\Delta_p)^s u\right\} = 0 \qquad \text{in } Q_1 .
\end{equation}
Here, for each fixed \(t\), the operator is defined by
\[
    (-\Delta_p)^s u(x,t)
    \coloneqq
    \operatorname{P.V.}\int_{\mathbb{R}^d}\frac{|u(x,t)-u(y,t)|^{p-2}\bigl(u(x,t)-u(y,t)\bigr)}{|x-y|^{d+sp}}\,\dd y .
\]
Equivalently, \eqref{eq:parabolic-obstacle-problem} corresponds to
\[
    u\geq \varphi,\qquad \partial_t u+(-\Delta_p)^s u\geq 0 \qquad \text{in } Q_1,
\]
and
\[
    \partial_t u+(-\Delta_p)^s u=0 \qquad \text{in } \{u>\varphi\}\cap Q_1.
\]
Thus \(u\) evolves according to the fractional \(p-\)caloric equation away from the contact set, while remaining constrained to lie above the obstacle.

For \(p=2\), \eqref{eq:parabolic-obstacle-problem} reduces to the
parabolic fractional obstacle problem studied in \cite{CF, BFR}; related obstacle problems also arise, for instance, in option-pricing models \cite{BM}. In the nonlinear nonlocal setting, the elliptic obstacle problem for integro-differential operators with fractional \(p-\)growth was studied in \cite{KKP0}. Away from the contact set, the constraint is inactive, and the solution is governed by the unconstrained parabolic fractional \(p-\)Laplacian flow. Regularity properties of this evolution equation have been investigated in \cite{AMRT, S, BLS2, L, GLT, JSU26}.

The problem also admits a natural variational interpretation. Namely, \eqref{eq:parabolic-obstacle-problem} can be viewed as the gradient flow of the fractional \(p-\)Dirichlet energy
\[
    v \mapsto \frac{1}{p}[v]_{W^{s,p}}^{p}
\]
over the closed convex set
\[
    \mathcal{K}_{\varphi}
    \coloneqq
    \left\{ v\in W^{s,p} \colon v\geq \varphi \text{ a.e.} \right\}.
\]
This naturally leads to weak solutions rather than viscosity solutions. In the elliptic setting, the weak theory for equations driven by the fractional \(p-\)Laplacian is by now well developed; see, for example, \cite{KMS2, KMS3, KMS1, DFM}. 

Our goal is to establish foundational regularity estimates for solutions of the parabolic \((s,p)-\)obstacle problem. For obstacles $\varphi$ satisfying the suitable regularity condition
\[
    \varphi\in C_x^{0,1}(Q_1)\cap C_t^{0,\alpha_\varphi}(Q_1), \quad \text{ for } \alpha_\varphi \in (0,1],
\]
we show that, under a mild tail-continuity assumption, solutions are Lipschitz continuous in space and H\"older continuous in time. We work on parabolic cylinders of the type $Q_r \coloneqq B_r(0)\times (-r^2,0]$.

\begin{theorem}\label{t:Lipschitz}
Let $p > 2$ and $s \in (0,1)$ be such that
\[
    q_c \coloneqq - 1 + p(1-s) < 1.
\]
Let \(u\) be a viscosity solution to \eqref{eq:parabolic-obstacle-problem} in \(Q_1\), and assume that
\begin{equation}\label{eq:LiDi_fora}
    u \in C_{\mathrm{loc}} (-1,0;L^{p-1}_{sp}(\R^d)).
\end{equation}
Then, for $\alpha$ given by
\begin{align*}
    \alpha = \min\left\{ \left( \dfrac{1}{1 - q_c}\right)^-,\alpha_\varphi \right\},
\end{align*}
there exists a constant \(C>0\), depending only on \(d\), \(p\), \(s\), \(\alpha\), $[\varphi]_{C_x^{0,1}}$, and $ [\varphi]_{C^{0,\alpha_\varphi}_t}$ such that
\[
    |u(x,t)-u(y,\tau)| \leq C \mathcal{M} \Bigl(|x-y| + \mathcal{M}^{(p-2)\alpha}|t-\tau|^\alpha\Bigr),
\]
for all \((x,t),(y,\tau) \in Q_{1/2}\), where
\begin{equation}\label{eq:constant-M}
    \mathcal{M} \coloneqq 1+\|u\|_{L^\infty(Q_{3/4})} + \sup_{t \in (-(3/4)^2,0]} \|u(\cdot,t)\|_{L^{p-1}_{sp}(\R^d)}.
\end{equation}

In particular, if \(q_c>0\) and $\alpha_\varphi=1$, then 
\[
    |u(x,t)-u(y,\tau)| \leq C \mathcal{M} \Bigl(|x-y| + \mathcal{M}^{p-2}|t-\tau|\Bigr),
\]
for all \((x,t),(y,\tau) \in Q_{1/2}\).
\end{theorem}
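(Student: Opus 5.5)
We would argue by normalization and a case split according to whether a point lies close to the contact set. First rescale: with $\mathcal{M}$ as in \eqref{eq:constant-M}, set $v(x,t)=u(x,\mathcal{M}^{2-p}t)/\mathcal{M}$. Since $(-\Delta_p)^s$ is $(p-1)$-homogeneous, $v$ solves \eqref{eq:parabolic-obstacle-problem} with obstacle $\varphi/\mathcal{M}$, and $v$ has sup and tail bounded by $1$. The obstacle keeps Lipschitz constant $[\varphi]_{C^{0,1}_x}/\mathcal{M}\le[\varphi]_{C^{0,1}_x}$ in space. In time its H\"older seminorm becomes $\mathcal{M}^{-1-(p-2)\alpha_\varphi}[\varphi]_{C^{0,\alpha_\varphi}_t}\le[\varphi]_{C^{0,\alpha_\varphi}_t}$. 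This explains the factor $\mathcal{M}^{(p-2)\alpha}$ in front of $|t-\tau|^\alpha$, so it suffices to prove the estimate with $\mathcal{M}=1$. The unconstrained theory is the key input. Inside $\{u>\varphi\}$, $u$ is a viscosity solution of $\partial_t u+(-\Delta_p)^su=0$, and we rely on the known interior estimates for the fractional $p$-caloric equation in the range $q_c<1$ (see \cite{JSU26}). These give Lipschitz regularity in space and $C^{1/(1-q_c)^-}$ in time, improved to Lipschitz when $q_c>0$, with constants depending on the sup and the tail, rescaled to cylinders $Q_r$ appropriately. The tail continuity \eqref{eq:LiDi_fora} is what allows those estimates and the stability of viscosity solutions to be applied.

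The core step is a growth estimate away from the free boundary. Fix $(x_0,t_0)\in Q_{1/2}$ with $u(x_0,t_0)>\varphi(x_0,t_0)$, and let $r$ be the largest radius for which the intrinsic cylinder $Q_r(x_0,t_0)$ lies in $\{u>\varphi\}$. If $r\ge 1/8$, we conclude directly from the interior estimate. Otherwise there is a contact point $(y_0,\tau_0)$ on the boundary of that cylinder. We then prove two bounds.
\begin{enumerate}
\item[(a)] An upper bound $u-\varphi\le C r$ on $Q_{r}(x_0,t_0)$, obtained from a barrier comparison.
\item[(b)] Rescaled interior estimates for $w=(u-\ell)/r$ on $Q_r$, where $\ell=\varphi(y_0,\tau_0)$.
\end{enumerate}
For (a), we use that $u$ is a supersolution everywhere. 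At the contact point, $u$ is touched from below by the obstacle plus a cone $\varphi(y_0,\tau_0)-L|x-y_0|-L|t-\tau_0|^{\alpha_\varphi}$. Combined with comparison on cylinders for the subsolution property in $\{u>\varphi\}$, with a barrier of the form $\varphi(y_0,\tau_0)+C(|x-y_0|^2/r+\dots)$ plus a tail correction, this shows $u$ cannot grow faster than linearly from the contact set. In (b), the function $w$ solves the same equation with the space–time scaling $(x,t)\mapsto(x_0+rx,t_0+r^{\theta}t)$. Here $\theta$ is chosen so that the equation is invariant given the linear growth; when the solution has size $r$ on scale $r$, $\theta=sp-(p-2)$ works. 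This yields $|\nabla u|\le C$ and the time modulus on $Q_{r/2}$. Controlling the tail of $w$ uses the linear growth of $u-\ell$ at all larger scales together with the bounded global tail. The series $\sum_k 2^{-k(sp)}2^{k}$ converges, and this is exactly where $q_c<1$, i.e.\ $p(1-s)<2$, enters.

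Finally, the cases are combined. In the contact set itself, $u=\varphi$ inherits the regularity of $\varphi$. For two points, one of which is near the free boundary, we compare each to a nearby contact point using (a) and the obstacle's regularity. The time exponent is capped by $\alpha_\varphi$, so we get $\alpha=\min\{(1/(1-q_c))^-,\alpha_\varphi\}$. Undoing the scaling then gives the stated inequality, and the case $q_c>0$, $\alpha_\varphi=1$ follows because the interior time estimate is then Lipschitz. I expect the main obstacle to be step (a), the linear growth away from the contact set in the parabolic nonlocal setting. The nonlocal tail interacts with the intrinsic time scaling $\theta$, which differs from the spatial scaling, so building a barrier that is simultaneously a supersolution, controls the tail, and respects the obstacle's H\"older-in-time behavior is the delicate point. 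I would first try a Lipschitz cone in space plus a term $Ct$ with a truncation outside $B_{1/2}$, using the tail bound on $\mathcal{M}$.
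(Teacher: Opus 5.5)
\textbf{There is a genuine gap, at the step you yourself flag.} Step (a) is the whole difficulty, and a comparison argument cannot deliver it.

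\emph{Why comparison cannot give (a).} Suppose you want to bound $u$ from above by a barrier $b$ on a cylinder $D$ around the contact point $(y_0,\tau_0)$. You need $b\ge\varphi$ and $\partial_t b+(-\Delta_p)^s b\ge 0$ in $D$. You also need $u\le b$ at the initial time of $D$ and on $(\mathbb{R}^d\setminus D)\times(\text{time interval})$.
\begin{itemize}
\item Off $D$ the only information you have is $|u|\le 1$. So $b\ge 1$ there, while $b\le\varphi(y_0,\tau_0)+Cr$ at $y_0$. A time-independent $b$ with these properties violates the minimum principle.
\item So $b$ must increase in time. Then the initial-time condition asks for linear growth of $u(\cdot,\tau)$ away from $y_0$, which is the very estimate you are proving.
\item Your quadratic $C|x-y_0|^2/r$ does not even stay above a merely Lipschitz obstacle near $y_0$ without an additive term of size $r$.
\item In the classical theory this growth comes from a Harnack inequality for $u-\varphi$. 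That is not available here, since $u-\varphi$ satisfies no usable equation for this nonlinear nonlocal operator.
\end{itemize}

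\emph{Step (b) is miscounted.} If $|w(y)|\lesssim|y|$ on the annuli $\{2^k\le|y|<2^{k+1}\}$, the $(p-1)$-th power of the tail is $\sum_k 2^{k(p-1-sp)}$, not $\sum_k 2^{k(1-sp)}$. This is bounded uniformly in $r$ only if $sp>p-1$. The hypothesis $q_c<1$ gives only $sp>p-2$, so the range $p-2<sp\le p-1$ is included (e.g.\ $p=3$, $s=1/2$). In that range the rescaled tail is of order $r^{-(p-1-sp)/(p-1)}$. Because the interior estimate is linear in $\mathcal{M}$, your gradient bound then blows up at the free boundary.

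\emph{How the paper avoids both problems.} It needs no free-boundary analysis at all.
\begin{itemize}
\item In the Ishii--Lions doubling, suppose the maximum point $(\bar x,\bar t)$ were a contact point. Then $u(\bar y,\bar t)\ge\varphi(\bar y,\bar t)$ gives $u(\bar x,\bar t)-u(\bar y,\bar t)\le[\varphi]_{C^{0,1}_x}|\bar x-\bar y|$.
\item This contradicts $u(\bar x,\bar t)-u(\bar y,\bar t)>c_\omega L|\bar x-\bar y|$ once $L>[\varphi]_{C^{0,1}_x}/c_\omega$.
\item Hence the subsolution inequality is available at $(x_K,t_K)$, and the supersolution inequality holds everywhere.
\item The unconstrained argument then applies verbatim. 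It works with differences of $u$ at the doubled points, not with the growth of $u$.
\item Time regularity follows by comparison with $\eta+L_0(t-t_0)+L_1|x|^\gamma+A(t-t_0)^{\alpha_\varphi}$. Its initial data are supplied by the already-proved spatial Lipschitz bound, and it stays above the obstacle because $A>[\varphi]_{C^{0,\alpha_\varphi}_t}$.
\end{itemize}
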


A few remarks are in order. First, the assumption that the obstacle is Lipschitz continuous in space is imposed mainly to present the proof in a clean form. If instead
\[
    \varphi\in C_x^{0,\beta_\varphi}(Q_1),
\]
for some \(\beta_\varphi<1\), then the proof of Theorem \ref{t:lip-in-space_Chibrata} gives
\[
    u\in C_x^{0,\beta_\varphi}(Q_{1/2}),
\]
by stopping the spatial bootstrapping argument at the exponent \(\beta_\varphi\). Since the time regularity is obtained independently, the same argument still gives spatial regularity when \(\alpha_\varphi=0\), although in that case it does not yield any improvement in time.

Second, the tail-continuity assumption \eqref{eq:LiDi_fora} is not needed to obtain spatial regularity of \(u\) with any exponent
\[
    \gamma < \min\left\{1,\frac{sp}{p-1}\right\}.
\]
Under the corresponding regularity assumptions on the obstacle, the same method also yields analogous time regularity.

The spatial scale appearing here is consistent with the elliptic regularity theory for fractional \(p-\)Laplace type equations, where H\"older, Lipschitz, and higher regularity estimates have been obtained in different settings; see, for instance, \cite{BT, BS, GJS, JSU26}.

We now clarify the mechanism behind Theorem \ref{t:Lipschitz}. At a heuristic level, one expects such an estimate to hold for the following reason. In the non-coincidence set \(\{u>\varphi\}\), the solution is fractional \(p-\)caloric and therefore enjoys the regularity of solutions to the homogeneous equation. On the coincidence set
\[
    \Lambda_\varphi(u) \coloneqq \{u=\varphi\},
\]
the solution is constrained by the obstacle and should inherit its regularity. The main difficulty is to turn this informal dichotomy into a quantitative estimate that remains valid across the free boundary.

We address this issue within the viscosity framework, combining a localization argument with the Ishii--Lions doubling variables method. The nonlocal version of the Ishii--Lions approach was first developed for the elliptic nonlocal $p-$Laplacian in \cite{BT}. We refer to \cite{CIL} for the classical viscosity theory and to \cite{IN, CLD, IJS} for related nonlocal and parabolic viscosity techniques. The penalization is chosen so that the test functions produced by the argument have large spatial gradients. This forces the maximum point on the subsolution side to lie away from the obstacle, where the equation is satisfied in the usual viscosity sense. The maximum point on the supersolution side may still belong to the coincidence set, but this causes no difficulty, since the viscosity supersolution inequality remains available there.

A further difference with respect to \cite{JSU26} concerns the treatment of the time variable. In that work, the doubling variables argument uses the H\"older continuity in time obtained from the weak theory. In the present viscosity setting, no such regularity theory is available in the literature. However, the argument does not require a quantitative H\"older estimate at this stage: it is enough to use the modulus of continuity in time of \(u\), which follows from the viscosity formulation. This modulus allows us to absorb the error terms produced by the localization in time. Once this point is addressed, the rest of the proof for the regularity in the space variable follows the strategy developed in \cite{JSU26}.

With the spatial regularity estimate in hand, we then turn to the regularity in time. As in \cite{JSU26}, the argument uses the already established spatial regularity to construct a suitable barrier. In the present obstacle setting, however, this step requires some additional care. The barrier must be a subsolution and, at the same time, must remain strictly above the obstacle in the relevant region. This is achieved through a further localization argument, which uses the assumed time regularity of the obstacle.  We then prove a comparison principle for the parabolic \((s,p)-\)obstacle problem, allowing the comparison with the barrier to be propagated into the interior. The comparison argument also contains a minor localization subtlety. In the contradiction argument, the penalization pushes the maximum point on the subsolution side away from the obstacle, so that the equation is satisfied there in the usual viscosity sense. Once this is ensured, the proof proceeds as in \cite{JSU26}.

The paper is organized as follows. In Section~\ref{s:prelim}, we introduce the viscosity formulation of the parabolic \((s,p)-\)obstacle problem and prove the comparison principle. In Section~\ref{s:IL}, we establish the spatial regularity estimate by adapting the Ishii--Lions argument described above. In Section~\ref{s:time:reg}, we prove the regularity in time and complete the proof of Theorem~\ref{t:Lipschitz}.

\section{Viscosity solutions and the comparison principle}\label{s:prelim}

We first fix some notation. Throughout the paper, \(\Omega\subset\R^d\) denotes a bounded smooth domain. We write
\[
    Q_r(x_0,t_0) \coloneqq B_r(x_0)\times (t_0-r^2,t_0],
    \qquad
    Q_r \coloneqq B_r(0)\times (-r^2,0].
\]

For \(t_1<t_2\), we denote by \(C^{2}(\Omega\times(t_1,t_2])\) the class of functions \(v\colon\Omega\times(t_1,t_2]\to\mathbb R\) that are \(C^2\) in the spatial variables and \(C^1\) in time. Similarly, \(C^{1,1}(\Omega\times(t_1,t_2])\) denotes the class of functions that are \(C^{1,1}\) in space and \(C^{0,1}\) in time.

We also use the tail space \(L^{p-1}_{sp}(\R^d)\), consisting of all functions
\(w\in L^{p-1}_{\mathrm{loc}}(\R^d)\) such that
\[
    \|w\|_{L^{p-1}_{sp}(\R^d)}
    \coloneqq
    \left(
    \int_{\R^d}
    \frac{|w(x)|^{p-1}}{1+|x|^{d+sp}}\,\dd x
    \right)^{\frac{1}{p-1}}
    <\infty.
\]

We write \(\gamma^-\) for a fixed number strictly smaller than \(\gamma\), and \(\gamma^+\) for a fixed number strictly larger than \(\gamma\). Unless otherwise stated, a constant is called universal if it depends only on \(d\), \(p\), and \(s\). Small universal constants are denoted by \(c\), and large universal constants by \(C\); their values may change from line to line. When a constant has to remain fixed throughout an argument, we enumerate it.
 
We next introduce the notion of viscosity solution to \eqref{eq:parabolic-obstacle-problem}.

\begin{definition}\label{def:viscosity-solution-obstacle}
Let \(\varphi\colon \Omega\times(t_1,t_2]\to\mathbb R\) be continuous. We say that \(u\) is a viscosity subsolution to the parabolic obstacle problem
\[
    \min\left\{
        u-\varphi,\,
        \partial_t u+(-\Delta_p)^s u
    \right\}=0
    \qquad\text{in }\Omega\times(t_1,t_2]
\]
if
\[
    u\in \mathrm{USC}(\Omega\times(t_1,t_2])
    \cap C_{\mathrm{loc}}(t_1,t_2;L^{p-1}_{sp}(\mathbb R^d))
\]
and whenever \(\phi\in C^{2}(Q_r(x_0,t_0))\), with
\(Q_r(x_0,t_0)\subset \Omega\times(t_1,t_2]\), touches \(u\) from above at
\((x_0,t_0)\) in \(Q_r(x_0,t_0)\), one has
\[
    \min\left\{
        u(x_0,t_0)-\varphi(x_0,t_0),\,
        \partial_t\phi(x_0,t_0)+(-\Delta_p)^s\phi_r(x_0,t_0)
    \right\}
    \leq 0,
\]
where
\[
    \phi_r(x,t)
    =
    \begin{cases}
        \phi(x,t),
        & (x,t)\in Q_r(x_0,t_0),\\[2mm]
        u(x,t),
        & \text{otherwise}.
    \end{cases}
\]
A viscosity supersolution is defined similarly. Furthermore, a viscosity solution is both a viscosity subsolution and a supersolution.
\end{definition}

The following result will be useful to obtain the comparison principle.

\begin{lemma}\label{l:pointwise-equation}
Let \(u\) be a viscosity subsolution of \eqref{eq:parabolic-obstacle-problem} in \(\Omega \times (t_1,t_2]\). Assume that \(\phi \in C^2(Q_r(x_0,t_0))\) with \((x_0,t_0) \not \in \Lambda_\varphi(u)\), where
\[
    Q_r(x_0,t_0) \subset \Omega \times (t_1,t_2],
\]
and that $\phi$ touches \(u\) from above at \((x_0,t_0)\) in \(Q_r(x_0,t_0)\). Then
\[
    \partial_t \phi(x_0,t_0) + (-\Delta_p)^s u(x_0,t_0) \le \,0.
\]
The supersolution side holds without the requirement that \((x_0,t_0) \not \in \Lambda_\varphi(u)\).
\end{lemma}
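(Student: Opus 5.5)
Since $(x_0,t_0)\notin\Lambda_\varphi(u)$ and $u\ge\varphi$, we have $u(x_0,t_0)-\varphi(x_0,t_0)>0$. Hence the minimum in Definition~\ref{def:viscosity-solution-obstacle} can only be $\le 0$ through its second entry. For every $\rho\in(0,r]$, the restriction of $\phi$ to $Q_\rho(x_0,t_0)$ still touches $u$ from above. Applying the definition on $Q_\rho(x_0,t_0)$ gives
\[
\partial_t\phi(x_0,t_0)+(-\Delta_p)^s\phi_\rho(x_0,t_0)\le 0 \qquad\text{for all } \rho\in(0,r],
\]
where $\phi_\rho$ equals $\phi$ in $Q_\rho(x_0,t_0)$ and $u$ outside. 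The task is then to let $\rho\to0$ and show that $(-\Delta_p)^s\phi_\rho(x_0,t_0)$ converges to $(-\Delta_p)^s u(x_0,t_0)$. Since only the time slice $t_0$ enters, we work at that slice, and the value may be $-\infty$ (a principal value or improper integral).

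Write $J(a)=|a|^{p-2}a$, which is monotone, and set $K(y)=|x_0-y|^{-d-sp}$. Because $\phi_\rho(x_0,t_0)=u(x_0,t_0)$, we get
\[
(-\Delta_p)^s\phi_\rho(x_0,t_0)=\int_{B_\rho(x_0)} J\big(\phi(x_0,t_0)-\phi(y,t_0)\big)K\,dy+\int_{\mathbb R^d\setminus B_\rho(x_0)} J\big(u(x_0,t_0)-u(y,t_0)\big)K\,dy .
\]
\emph{Step 1: the near part vanishes.} Since $\phi\in C^2$ and $p>2$, we have the standard estimate: $J(\phi(x_0)-\phi(y))+J(\phi(x_0)-\phi(2x_0-y))=O(|\nabla\phi|^{p-2}|y-x_0|^{p}+|y-x_0|^{2(p-1)})$ when $\nabla\phi(x_0)\ne0$, and $O(|y-x_0|^{2(p-1)})$ otherwise. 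In either case it is bounded by $C|y-x_0|^{\min\{p,2p-2\}}$ near $x_0$. Symmetrizing the principal value, the near integral is therefore $O(\rho^{\min\{p,2p-2\}-sp})\to0$, because $p>sp$ and $2p-2>sp$ for $p\ge2$.

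\emph{Step 2: the far part is monotone in $\rho$.} Since $\phi\ge u$ on $Q_r$ with equality at $(x_0,t_0)$, we have $u(x_0,t_0)-u(y,t_0)\ge \phi(x_0,t_0)-\phi(y,t_0)$. The symmetrized near-field bound from Step 1 applied to $\phi$ gives an integrable lower bound on $J(u(x_0)-u(y))+J(u(x_0)-u(2x_0-y))$ over $B_r(x_0)$. Combined with $u(\cdot,t_0)\in L^{p-1}_{sp}$ for the tail, this means the negative part of the symmetrized integrand is integrable. Fatou's lemma, or monotone convergence applied to the positive part, then yields
\[
\liminf_{\rho\to0}\int_{\mathbb R^d\setminus B_\rho}J(u(x_0)-u(y))K\,dy\ \ge\ (-\Delta_p)^s u(x_0,t_0),
\]
with the right side well defined in $(-\infty,+\infty]$ as a principal value. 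Actually, the displayed inequality for $\rho$ bounds the far part above by $-\partial_t\phi-o(1)$, so the positive part is also integrable and the principal value exists and is finite.

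Passing to the limit gives $\partial_t\phi(x_0,t_0)+(-\Delta_p)^s u(x_0,t_0)\le0$. The supersolution case follows the same steps with the inequalities reversed: the supersolution condition in the definition reads $\min\{u-\varphi,\ \partial_t\phi+(-\Delta_p)^s\phi_r\}\ge0$, which forces $\partial_t\phi+(-\Delta_p)^s\phi_r\ge0$ at every point, so no condition on the contact set is needed. The main obstacle is Step 2, namely justifying the existence of the principal value of $(-\Delta_p)^s u$ and the limit exchange when $u$ is merely semicontinuous. The key point is the one-sided control from $\phi$ near $x_0$ together with the $L^{p-1}_{sp}$ control of the tail.
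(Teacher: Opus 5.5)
Your proof is correct and follows the same route as the paper. Off the contact set, the first entry of the minimum in Definition~\ref{def:viscosity-solution-obstacle} is positive, so the second entry must be nonpositive. Your Steps 1--2 then write out the limiting argument of \cite[Lemma~3.1]{JSU26} that the paper cites instead of reproducing: shrink $\rho\to0$, kill the near part with the symmetrized $C^2$ bound, and pass to the limit in the far part using $J(u(x_0,t_0)-u(y,t_0))\ge J(\phi(x_0,t_0)-\phi(y,t_0))$ together with the $L^{p-1}_{sp}$ tail. One caveat, which the paper's proof shares: $u\ge\varphi$ is not part of being a subsolution, so what the argument really uses is $u(x_0,t_0)>\varphi(x_0,t_0)$. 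That is exactly the situation in which the lemma is applied in Theorem~\ref{t:comparison-viscosity}, at points where $v>u\ge\varphi$.
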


\begin{proof}
The argument follows the proof of \cite[Lemma~3.1]{JSU26}. The only additional point in the subsolution case is that we must assume \((x_0,t_0) \not \in \Lambda_\varphi(u)\), since the equation holds only there. The supersolution case follows directly from \eqref{eq:parabolic-obstacle-problem}, which yields
\[
    \partial_t u+(-\Delta_p)^s u \geq 0 \quad \text{in } Q_1.
\]
\end{proof}

We next establish the comparison principle for viscosity supersolutions and subsolutions to \eqref{eq:parabolic-obstacle-problem}. 

\begin{theorem}\label{t:comparison-viscosity}
Let \(u\) and \(v\) be a viscosity supersolution and a viscosity subsolution, respectively, of \eqref{eq:parabolic-obstacle-problem} in \(\Omega \times (t_1,t_2]\). Assume, in addition, that 
\[
 -u,v \in \mathrm{USC} (\overline{\Omega}\times[t_1,t_2]).
\]
If
\begin{align*}
    &u\geq v \text{ on } \p\Omega\times(t_1,t_2] \text{ and on } \Omega \times \{t_1\}\\
    \intertext{and}
    &u \ge v, \quad \text{a.e. in } \bigl(\R^d\setminus \Omega \bigr) \times (t_1,t_2],
\end{align*}
then
\[
    u \ge v \quad \text{in } \Omega \times (t_1,t_2].
\]
\end{theorem}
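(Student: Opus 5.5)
We argue by contradiction and assume that \(\sup_{\Omega\times(t_1,t_2]}(v-u)=\theta>0\). The first step is to make the subsolution strict in time. For small \(\varepsilon>0\) set
\[
    v_\varepsilon(x,t)\coloneqq v(x,t)-\frac{\varepsilon}{t_2-t}-\varepsilon (t-t_1).
\]
Subtracting a function of time only leaves the nonlocal term unchanged. Hence \(v_\varepsilon\) is a viscosity subsolution of \(\partial_t v_\varepsilon+(-\Delta_p)^s v_\varepsilon\le -\varepsilon\) wherever \(v\) is a subsolution of the equation. Moreover \(v_\varepsilon\to-\infty\) as \(t\to t_2\), and for \(\varepsilon\) small we still have \(\sup(v_\varepsilon-u)\ge\theta/2\). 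By upper semicontinuity of \(v_\varepsilon-u\) on the compact set \(\overline\Omega\times[t_1,t_2)\) and the boundary ordering, this positive supremum is attained at an interior point.

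Next we double variables in space and time:
\[
    \Phi(x,y,t,\tau)\coloneqq v_\varepsilon(x,t)-u(y,\tau)-\frac{|x-y|^{q}}{q\,\delta}-\frac{|t-\tau|^2}{2\delta},
\]
with \(q>\max\{2,\,sp/(p-1)\}\) chosen so that the test function is \(C^2\) in \(x\). Using the boundary and exterior data we get the standard conclusions. Maximum points \((x_\delta,y_\delta,t_\delta,\tau_\delta)\) lie in the interior for \(\delta\) small, and \(|x_\delta-y_\delta|^q/\delta\to0\), \(|t_\delta-\tau_\delta|^2/\delta\to0\). Along a subsequence both points converge to a maximum point \((\hat x,\hat t)\) of \(v_\varepsilon-u\), and \(v_\varepsilon(x_\delta,t_\delta)\to v_\varepsilon(\hat x,\hat t)\), \(u(y_\delta,\tau_\delta)\to u(\hat x,\hat t)\).

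The key obstacle-specific step is to show that \((x_\delta,t_\delta)\notin\Lambda_\varphi(v)\), which is what lets us invoke Lemma \ref{l:pointwise-equation} on the subsolution side. Since \(u\) is a supersolution, \(u\ge\varphi\). If \(v(x_\delta,t_\delta)\le\varphi(x_\delta,t_\delta)\) held along a subsequence, then
\[
    v_\varepsilon(x_\delta,t_\delta)-u(y_\delta,\tau_\delta)\le \varphi(x_\delta,t_\delta)-\varphi(y_\delta,\tau_\delta)\to0
\]
by continuity of \(\varphi\). This contradicts \(\Phi\ge\theta/2\). So \(v>\varphi\) at \((x_\delta,t_\delta)\) for \(\delta\) small, and the equation holds there in the viscosity sense. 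On the supersolution side Lemma \ref{l:pointwise-equation} requires nothing. Applying the lemma at the two points with the translated test functions gives:
\begin{align*}
    \tfrac{t_\delta-\tau_\delta}{\delta}+(-\Delta_p)^s v_\varepsilon(x_\delta,t_\delta)&\le-\varepsilon,\\
    \tfrac{t_\delta-\tau_\delta}{\delta}+(-\Delta_p)^s u(y_\delta,\tau_\delta)&\ge 0.
\end{align*}
Here the nonlocal terms are understood as in \cite[Lemma~3.1]{JSU26}, that is, split into an inner part evaluated on the test function in a small ball and an outer part evaluated on the functions themselves. Subtracting the two inequalities yields
\[
    (-\Delta_p)^s v_\varepsilon(x_\delta,t_\delta)-(-\Delta_p)^s u(y_\delta,\tau_\delta)\le-\varepsilon.
\]

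The main difficulty is to derive a contradiction from this nonlocal difference. Write \(h=x_\delta-y_\delta\). Maximality of \(\Phi\) gives, for every \(z\),
\[
    v_\varepsilon(x_\delta,t_\delta)-v_\varepsilon(x_\delta+z,t_\delta)\ge u(y_\delta,\tau_\delta)-u(y_\delta+z,\tau_\delta),
\]
provided \(x_\delta+z\) and \(y_\delta+z\) both lie in \(\Omega\). When one of them lies outside \(\Omega\), we use the exterior ordering \(u\ge v\) instead. Two ingredients control the times: the continuity assumption \(C_{\mathrm{loc}}(t_1,t_2;L^{p-1}_{sp})\), and \(t_\delta-\tau_\delta\to0\). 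Together they show that the mismatch in these exterior contributions vanishes as \(\delta\to0\). By monotonicity of \(r\mapsto|r|^{p-2}r\), the integrand of the difference is then nonnegative up to this vanishing error. The singular part near \(z=0\) is handled by the \(C^2\) test function, with a ball radius \(\rho\) sent to \(0\) after \(\delta\). The difference is therefore bounded below by \(-o(1)\), which contradicts \(\le-\varepsilon\). Hence \(v_\varepsilon\le u\) for every small \(\varepsilon\), and letting \(\varepsilon\to0\) gives \(u\ge v\).
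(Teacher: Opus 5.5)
Your argument follows the paper's route. First, $u\ge\varphi$ together with the positive maximum forces the subsolution point off $\Lambda_\varphi(v)$; your bound $v_\varepsilon(x_\delta,t_\delta)-u(y_\delta,\tau_\delta)\le\varphi(x_\delta,t_\delta)-\varphi(y_\delta,\tau_\delta)\to0$ is a slightly more direct version of the paper's limit-point argument. Then Lemma~\ref{l:pointwise-equation} is applied on both sides, and the nonlocal argument of \cite[Theorem 3.2]{JSU26}, which the paper simply invokes, finishes. Your outline of that last step, with a strict-in-time perturbation, is fine.

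There is, however, a concrete gap at the terminal time. The statement asserts $u\ge v$ on $\Omega\times(t_1,t_2]$, and Definition~\ref{def:viscosity-solution-obstacle} tests at $t_0=t_2$ with backward cylinders $B_r(x_0)\times(t_0-r^2,t_0]$. Because of the term $\varepsilon/(t_2-t)$, your construction never sees $t=t_2$. Your claim $\sup(v_\varepsilon-u)\ge\theta/2$ for small $\varepsilon$ presupposes $\sup_{\Omega\times(t_1,t_2)}(v-u)=\theta$. Semicontinuity alone does not exclude that $v\le u$ for $t<t_2$ while $v(x_0,t_2)>u(x_0,t_2)$: upper semicontinuity allows $v$ to jump up at $t_2$, and lower semicontinuity allows $u$ to jump down. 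Ruling this out requires the viscosity inequalities at $t=t_2$, which your argument never uses. Accordingly, letting $\varepsilon\to0$ only gives $u\ge v$ for $t<t_2$.

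The fix is to use $v-\varepsilon(t-t_1)$ without the blow-up term and to double over $t,\tau\in[t_1,t_2]$, as the paper does. A maximum at $t_2$ is then admissible for the test functions. The price is that making the exterior time mismatch vanish there needs continuity of $t\mapsto u(\cdot,t),v(\cdot,t)$ in $L^{p-1}_{sp}$ up to $t_2$, which the paper's doubling over $[t_1,t_2]$ implicitly uses as well.

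A smaller point concerns the radius $\rho$. Lemma~\ref{l:pointwise-equation} already gives the two inequalities with $(-\Delta_p)^s$ evaluated on $v$ and $u$ themselves. For small $|z|$, maximality then makes the integrand of the difference pointwise nonnegative, so no radius $\rho$ is needed. If you use the split form instead, the inner test-function contributions are bounded by a constant depending on $\delta$ (through the $C^2$ size of the penalization near $x_\delta$) times a positive power of $\rho$. So $\rho\to0$ must be taken for each fixed $\delta$, before letting $\delta\to0$, not after.
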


\begin{proof}
We argue in a similar way to \cite[Theorem 3.2]{JSU26}. We will only describe the setup that allows us to reduce the situation to the one already treated there.

Set
\[
    M\coloneqq \sup_{\Omega\times(t_1,t_2]}(v-u).
\]
Assume, seeking a contradiction, that \(M>0\). By the parabolic boundary ordering, there exists
\[
    (\bar x,\bar t)\in\Omega\times(t_1,t_2]
\]
such that
\[
    M = v(\bar x, \bar t)-u(\bar x, \bar t)>0.
\]
Note that since $u\geq \varphi$, we have \(v(\bar x, \bar t)>\varphi(\bar x, \bar t)\). For \(\varepsilon>0\), define
\[
    M_\varepsilon
    \coloneqq
    \sup_{\substack{x,y\in\overline\Omega\\ t,\tau\in[t_1,t_2]}}
    \left[
        v(x,t)-u(y,\tau)
        -\frac{|x-y|^2}{2\varepsilon}
        -\frac{|t-\tau|^2}{2\varepsilon}
    \right].
\]
Let $(x_\varepsilon,y_\varepsilon,t_\varepsilon,\tau_\varepsilon)$ be a point at which \(M_\varepsilon\) is attained. It then follows that $M_\varepsilon\ge M>0$. After passing to a subsequence,
\[
    x_\varepsilon,y_\varepsilon\to x^\ast,
    \qquad
    t_\varepsilon,\tau_\varepsilon\to t^\ast,
\]
and
\[
    v(x_\varepsilon,t_\varepsilon) - u(y_\varepsilon,\tau_\varepsilon) \to M.
\]
In particular, for \(\varepsilon\) small enough,
\[
    x_\varepsilon,y_\varepsilon\in\Omega,
    \qquad
    t_\varepsilon,\tau_\varepsilon>t_1.
\]
Moreover, the same argument as before gives $v(x^\ast,t^\ast) > \varphi(x^\ast,t^\ast)$, which implies that $v(x_\varepsilon,t_\varepsilon)>\varphi(x_\varepsilon,t_\varepsilon)$ for $\varepsilon$ sufficiently small.

Define
\[
    \phi_\varepsilon(x,t)
    \coloneqq
    u(y_\varepsilon,\tau_\varepsilon)
    +M_\varepsilon
    +\frac{|x-y_\varepsilon|^2}{2\varepsilon}
    +\frac{|t-\tau_\varepsilon|^2}{2\varepsilon},
\]
and
\[
    \psi_\varepsilon(y,\tau)
    \coloneqq
    v(x_\varepsilon,t_\varepsilon)
    -M_\varepsilon
    -\frac{|y-x_\varepsilon|^2}{2\varepsilon}
    -\frac{|\tau-t_\varepsilon|^2}{2\varepsilon}.
\]
Then \(\phi_\varepsilon\) touches \(v\) from above at \((x_\varepsilon,t_\varepsilon)\), and \(\psi_\varepsilon\) touches \(u\) from below at \((y_\varepsilon,\tau_\varepsilon)\). Therefore, by
Lemma \ref{l:pointwise-equation}, we have
\[
    \partial_t\phi_\varepsilon(x_\varepsilon,t_\varepsilon)
    +
    (-\Delta_p)^s v(x_\varepsilon,t_\varepsilon)
    \le0,
\]
and
\[
    \partial_\tau\psi_\varepsilon(y_\varepsilon,\tau_\varepsilon)
    +
    (-\Delta_p)^s u(y_\varepsilon,\tau_\varepsilon)
    \ge0.
\]
From here, we can argue as in \cite[Theorem 3.2]{JSU26} since we arrived at the same viscosity inequalities.
\end{proof}

\section{Spatial regularity via Ishii--Lions method}\label{s:IL}

In this section, we adapt the parabolic Ishii--Lions argument developed in \cite{JSU26} to the equation with an obstacle $\varphi$. The main idea is to reduce to the same viscosity inequalities obtained there, so that we can repeat the exact same argument.

We shall work in the normalized setting
\begin{equation}\label{assumption:normalized-setting}
    \|u\|_{L^\infty(Q_1)} + \sup_{t \in (-1,0]}\|u(\cdot,t)\|_{L^{p-1}_{sp}(\R^d)} \leq 1.
\end{equation}

Moreover, we shall use the following bootstrap hypothesis. For some
\(\kappa\in[0,1)\), assume that there exists a constant \(C_1(\kappa)\) such that, for every \(x,y\in B_1\) and \(t\in(-1,0]\),
\begin{equation}\label{assumption:bat-ind}
    |u(x,t)-u(y,t)|
    \leq
    C_1(\kappa)|x-y|^\kappa.
\end{equation}
To start the bootstrapping argument, we begin by assuming only that $u$ is uniformly continuous in \(\overline Q_1\) and therefore that this condition is satisfied for $\kappa=0$.

\subsection{The setup}

For constants $L, L_2, \beta^\ast$ to be fixed, and $t_0 \in (-1,0]$, define the function with doubled spatial variables
\[
    \Phi(x,y,t) \coloneqq u(x,t)-u(y,t)-L\,\omega(\abs{x-y})-L_2 \,\psi(x) - L_2(t_0-t)^{1+\beta^\ast},
\]
for $x,y \in B_1$ and $t \in (-1,t_0]$. We will consider two different moduli of continuity $\omega$, the first of H\"older type and the second of Lipschitz type, modified by a lower order term to make it strictly concave. We denote them respectively by
\begin{align*}
    \omega_\gamma(r) \coloneqq r^\gamma,
\end{align*}
and
\begin{align*}
    \widetilde\omega(r) \coloneqq r + \frac{r}{20 \log (r/4)}.
\end{align*}
In either case, it will always satisfy
\begin{equation}\label{eq:omega-linear-lower-bound}
    \omega(r)\geq c_\omega r
\end{equation}
for some \(c_\omega>0\) and $r\leq r_0$ universal. The localizing function is given by $\psi(x)\coloneqq \psi_0(x)^m$, $m>2$, for a smooth nonnegative $\psi_0$ that vanishes on $\overline{B}_{1/2}$ and is strictly positive on $B_1\setminus \overline{B}_{1/2}$. This choice implies that there exists a constant $C>0$ such that
\[
    |\nabla \psi(x)|\leq C\psi(x)^\frac{m-1}{m}, \quad \text{ for all } x\in B_1.
\]
We will prove that, for a choice of $L_2$, $m$, and $\beta^\ast$ large and universal, there exists $L$ large enough such that the function $\Phi$ is nonpositive, which implies that $u$ has modulus of continuity $\omega$ in space at $t = t_0$. For the sake of simplicity, we assume $t_0 = 0$.

Suppose, seeking a contradiction, that
\[
    M_L
    \coloneqq
    \max_{\substack{x,y\in\overline{B_1}\\t\in[-1,t_0]}}
    \Phi(x,y,t)>0,
\]
and let \((\bar x,\bar y,\bar t)\) be a maximum point. We first claim
that
\begin{equation}\label{eq:bar-x-outside-coincidence}
    u(\bar x,\bar t)>\varphi(\bar x,\bar t).
\end{equation}
Indeed, since \(M_L>0\) and the localization terms are nonnegative,
\begin{equation}\label{eq:positive-max-obstacle}
    u(\bar x,\bar t)-u(\bar y,\bar t)
    >
    L\omega(\abs{\bar x-\bar y}).
\end{equation}
In particular, \(\bar x\neq\bar y\). We will now observe that \((\bar x,\bar t)\not\in\Lambda_\varphi(u)\). If, on the contrary,
 \(u(\bar x,\bar t)=\varphi(\bar x,\bar t)\) then
\[
\begin{aligned}
    u(\bar x,\bar t)-u(\bar y,\bar t)
    &\leq
    \varphi(\bar x,\bar t)-\varphi(\bar y,\bar t)\\
    &\leq [\varphi]_{C^{0,1}_x}\abs{\bar x-\bar y}.
\end{aligned}
\]
On the other hand, by \eqref{eq:omega-linear-lower-bound} and
\eqref{eq:positive-max-obstacle},
\[
    u(\bar x,\bar t)-u(\bar y,\bar t)
    >
    c_\omega L\abs{\bar x-\bar y}.
\]
This is a contradiction as soon as
\begin{equation}\label{eq:L-obstacle-choice}
    L>\frac{[\varphi]_{C^{0,1}_x}}{c_\omega}.
\end{equation}
Hence \eqref{eq:bar-x-outside-coincidence} holds.

We now double the time variable and define
\[
\begin{aligned}
\Psi_K(x,y,t,\tau)\coloneqq{}&
u(x,t)-u(y,\tau)-L\,\omega(\abs{x-y})-L_2\psi(x) \\
&\qquad
-L_2(t_0-t)^{1+\beta^\ast}-K(t-\tau)^2.
\end{aligned}
\]
Let
\begin{equation}\label{eq:MK-geq-ML-obstacle}
    M_K
    \coloneqq
    \max_{\substack{x,y\in\overline{B_1}\\
t,\tau\in[-1,t_0]}}
\Psi_K(x,y,t,\tau) = \Psi_K(x_K,y_K,t_K,\tau_K) \geq M_L>0.
\end{equation}
We moreover define
\[
    a_K=x_K-y_K,\quad b_K \coloneqq t_K-\tau_K.
\]
This implies in particular that, up to subsequence, $a_K\to \tilde a$ as $K\to \infty$ for some $\tilde a$ satisfying $|\tilde a|>0$.

The usual doubling-of-time argument yields
\begin{equation}\label{eq:time-collapse-obstacle}
    \abs{t_K-\tau_K}\longrightarrow0,
    \qquad
    K\abs{t_K-\tau_K}^2\longrightarrow0,
    \qquad
    M_K\longrightarrow M_L
\end{equation}
as \(K\to\infty\).

We claim that
\begin{equation}\label{eq:xK-outside-coincidence}
    u(x_K,t_K)>\varphi(x_K,t_K)
\end{equation}
for every sufficiently large \(K\). Otherwise, there would exist a
sequence \(K_j\to\infty\) such that
\[
    u(x_{K_j},t_{K_j})
    =
    \varphi(x_{K_j},t_{K_j}).
\]
Passing to a subsequence, we may assume that
\[
\begin{aligned}
    x_{K_j}&\longrightarrow x_\infty,
    &y_{K_j}&\longrightarrow y_\infty,\\
    t_{K_j}&\longrightarrow t_\infty,
    &\tau_{K_j}&\longrightarrow t_\infty.
\end{aligned}
\]
By the continuity of \(u\) and \(\varphi\),
\begin{equation}\label{eq:limit-contact-obstacle}
    u(x_\infty,t_\infty)
    =
    \varphi(x_\infty,t_\infty).
\end{equation}
Moreover, \eqref{eq:time-collapse-obstacle} implies
\[
    \Phi(x_\infty,y_\infty,t_\infty)
    =
    M_L>0.
\]
Thus \((x_\infty,y_\infty,t_\infty)\) is a positive maximum point of
\(\Phi\). The argument leading to
\eqref{eq:bar-x-outside-coincidence}, applied to this maximum point,
gives
\[
    u(x_\infty,t_\infty)
    >
    \varphi(x_\infty,t_\infty),
\]
contradicting \eqref{eq:limit-contact-obstacle}. This proves
\eqref{eq:xK-outside-coincidence}.

Consequently, for \(L\) satisfying \eqref{eq:L-obstacle-choice} and
\(K\) sufficiently large, the point
\((x_K,t_K)\) does not belong to the coincidence set. Hence, the viscosity subsolution inequality for
\[
    \partial_tu+(-\Delta_p)^su=0
\]
holds at \((x_K,t_K)\). The viscosity supersolution inequality
holds at \((y_K,\tau_K)\), whether or not
\((y_K,\tau_K)\) belongs to the coincidence set.

In the argument of this section, we will consider $L$ to be a constant fixed large enough to satisfy some conditions which will appear throughout the section, but which is independent of $K$. Then, we take $K$ large enough so that $|a_K|>|\tilde a|/2$. Note that $\tilde a$ depends implicitly on $L$, which is fixed. Since $b_K\to 0$, we can now take $K$ possibly even larger to ensure
\begin{equation}\label{eq:rusbad}
    \begin{aligned}
     &\left(\displaystyle \int_{B_{\frac{1}{16}}^c} |u(y_K+z, t_K)-u(y_K+z, \tau_K)|^{p-1}\frac{\dd z}{|z|^{d+sp}}\right)^\frac{1}{p-1}\\
    &\qquad \leq \frac{|\tilde a|}{2} \leq |a_K|.
\end{aligned}
\end{equation}
Therefore, $K$ depends on $L$, but this dependence is harmless. Moreover, $K$ depends on the modulus of continuity of the $\Tail$ of $u$. But since $L$ does not depend on $K$, this dependence is harmless as well.

By construction, the points $t_K$ and $\tau_K$ are interior.  Therefore, for $K$ sufficiently large, we can assume that
\begin{align}\label{eq:rusbem}
    |u(z,t_K)-u(z,\tau_K)|\leq \frac{|\tilde a|}{2}\leq |a_K|,
\end{align}
for any $z\in \overline B_1$.

Define the auxiliary function 
\begin{equation*}
    \phi(x,y) \coloneqq L\omega(|x-y|) + L_2 \psi(x).
\end{equation*}
To obtain viscosity inequalities, we define the test functions 
\begin{align*}
w_1(x,t)
&=
\begin{cases}
\Lambda_1 + \phi(x,y_K) + L_2|t|^{1+\beta^\ast} + K(t-\tau_K)^2
& \text{for } x\in B_{\delta_1\,|a_K|}(x_K),\\
u(x,t)
& \text{otherwise},
\end{cases}
\\[1em]
w_2(y,\tau)
&=
\begin{cases}
\Lambda_{2} - \phi(x_K,y) - K(t_K-\tau)^2
& \text{for } y\in B_{\delta_1\,|a_K|}(y_K),\\
u(y,\tau)
& \text{otherwise},
\end{cases}
\end{align*}
where $\Lambda_{1} = u(y_K,\tau_K) + M_K$ and $\Lambda_{2} = u(x_K,t_K) - M_K -L_2|t_K|^{1+\beta^\ast}$. The constant $\delta_1$ will be chosen to be small, possibly depending on $|a_K|$. The time variables $t$ and $\tau$ are defined in a neighborhood of $t_K$ and $\tau_K$, respectively.

Note that $w_1$ touches $u$ from above at $(x_K,t_K)$ and $w_2$ touches $u$ from below at $(y_K,\tau_K)$. Therefore, by the viscosity condition, we have 
\begin{align*}
     &\p_t w_1(x_K,t_K) + (-\Delta_p)^s w_1(x_K,t_K) \leq \, 0,\\
     &\p_t w_2(y_K,\tau_K) + (-\Delta_p)^s w_2(y_K, \tau_K) \geq \, 0.
\end{align*}
Subtracting the inequalities, we obtain 
\begin{equation}\label{eq:evaluate-at-max}
   \begin{aligned}
       \II\,&\coloneqq(-\Delta_p)^s w_1(x_K,t_K) - (-\Delta_p)^s w_2(y_K, \tau_K) \\
       &\leq \p_tw_2(y_K,\tau_K)-\p_tw_1(x_K,t_K)= L_2(1+\beta^\ast)(-t_K)^{\beta^\ast}.
   \end{aligned}
\end{equation}

For the sake of organization, we will split the analysis of the term $\II$ and the time derivative part into several lemmas, and combine them at the end of this section to get a contradiction, in the different scenarios, with the fact that we are assuming \eqref{eq:MK-geq-ML-obstacle}.

\subsection{Geometric estimates}

In this section, we estimate each term appearing from the viscosity inequalities from \eqref{eq:evaluate-at-max}. To bound $\II$, we split the spatial ball into four appropriate domains and bound each of them accordingly. We start by bounding the right-hand side.

\begin{lemma}\label{l:estimate_time}
Let $u$ be a viscosity solution to \eqref{eq:parabolic-obstacle-problem}, and assume \eqref{assumption:normalized-setting} and \eqref{assumption:bat-ind}. Let $K$ be large enough so that \eqref{eq:rusbad} holds.  Then, the quantity $\II$ defined in \eqref{eq:evaluate-at-max} satisfies
\begin{equation*}
        \II \leq C(\kappa)|a_K|^\frac{\kappa \beta^\ast}{1+\beta^\ast},
\end{equation*}    
where $C>0$ depends on $L_2$, $\beta^\ast$, $\kappa$, and universal constants.
\end{lemma}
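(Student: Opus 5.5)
The plan is to start from \eqref{eq:evaluate-at-max}, which already gives $\II \le L_2(1+\beta^\ast)(-t_K)^{\beta^\ast}$ (recall $t_0=0$). It therefore suffices to show that $(-t_K)^{1+\beta^\ast}$ is bounded by a constant multiple of $|a_K|^\kappa$. Once that holds, raising to the power $\beta^\ast/(1+\beta^\ast)$ yields the claim.

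To get this bound on $(-t_K)$, use the positivity of the maximum, $M_K>0$ from \eqref{eq:MK-geq-ML-obstacle}. Since $L\omega(|a_K|)\ge 0$, $L_2\psi(x_K)\ge0$ and $K(t_K-\tau_K)^2\ge 0$, the inequality $\Psi_K(x_K,y_K,t_K,\tau_K)>0$ gives
\[
L_2(-t_K)^{1+\beta^\ast} < u(x_K,t_K)-u(y_K,\tau_K).
\]
Split the right-hand side as
\[
\bigl[u(x_K,t_K)-u(y_K,t_K)\bigr]+\bigl[u(y_K,t_K)-u(y_K,\tau_K)\bigr].
\]
The first bracket is controlled by the bootstrap hypothesis \eqref{assumption:bat-ind}, giving at most $C_1(\kappa)|a_K|^\kappa$. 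The second bracket is the time mismatch between the doubled times. Since $y_K\in\overline B_1$, it is at most $|a_K|$ by \eqref{eq:rusbem}, which holds for $K$ large as arranged in the setup.

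Because $|a_K|\le 2$ and $\kappa<1$, we have $|a_K|\le 2^{1-\kappa}|a_K|^\kappa$. Hence
\[
(-t_K)^{1+\beta^\ast}\le \frac{C_1(\kappa)+2}{L_2}\,|a_K|^\kappa,
\]
and so
\[
\II\le L_2(1+\beta^\ast)\Bigl(\tfrac{C_1(\kappa)+2}{L_2}\Bigr)^{\frac{\beta^\ast}{1+\beta^\ast}}|a_K|^{\frac{\kappa\beta^\ast}{1+\beta^\ast}}.
\]
This constant depends only on $L_2$, $\beta^\ast$ and $\kappa$ (through $C_1(\kappa)$), as the lemma requires.

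The only delicate point is the second bracket, the discrepancy between $t_K$ and $\tau_K$. It is not controlled by any quantitative time regularity. Instead it relies on the qualitative collapse \eqref{eq:time-collapse-obstacle} together with uniform continuity of $u$, which is exactly what \eqref{eq:rusbem} encodes once $K$ is chosen large depending on $L$. I would double-check that \eqref{eq:rusbem} is applied at a point of $\overline B_1$, which it is, since $y_K\in\overline B_1$.
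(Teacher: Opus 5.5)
Your proof is correct and follows the paper's argument. The positivity of $M_K$ gives $L_2(-t_K)^{1+\beta^\ast}<u(x_K,t_K)-u(y_K,\tau_K)$; the paper bounds the right-hand side by the bootstrap hypothesis \eqref{assumption:bat-ind} plus \eqref{eq:rusbem}, and raises to the power $\beta^\ast/(1+\beta^\ast)$ in \eqref{eq:evaluate-at-max}. Your explicit absorption $|a_K|\le 2^{1-\kappa}|a_K|^\kappa$ spells out a step the paper leaves implicit.
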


\begin{proof} 
From the previous discussion, we know that the point where the supremum \eqref{eq:MK-geq-ML-obstacle} is attained is interior. Since the supremum in \eqref{eq:MK-geq-ML-obstacle} is positive, we have
\[
    L_2(-t_K)^{1+\beta^\ast} < u(x_K,t_K) - u(y_K,\tau_K).
\]
Combining it with assumptions \eqref{assumption:bat-ind} and \eqref{eq:rusbem}, we obtain
\[
    L_2(-t_K)^{1+\beta^\ast} < C(\kappa)|a_K|^\kappa +C_1|a_K|.
\]
In view of \eqref{eq:evaluate-at-max}, we obtain
\begin{align*}
   \II & \leq L_2(1+\beta^\ast)|t_K|^{\beta^\ast}\\
   & = (1+\beta^\ast)L_2^{\frac{1}{1+\beta^\ast}} C^{\frac{\beta^\ast}{1+\beta^\ast}} \left(C(\kappa)|a_K|^\kappa \right)^\frac{\beta^\ast}{1+\beta^\ast},
\end{align*}
as intended.
\end{proof}

Recall that we denoted $a_K=x_K-y_K$.
Let $\mathcal{C} = \mathcal{C}(a_K)$ be the cone of directions defined by 

\begin{equation*}
    \C(a_K) \coloneqq \left\{ z \in B_{|a_K|/2} \colon |\langle a_K, z \rangle| \geq \sqrt{1 - \delta_0^2} \, |a_K|\,|z| \right\}.
\end{equation*}
Let also
\[
    \mathcal{D}_1 = B_{\delta_1\,|a_K|} \cap \mathcal{C}^c,
    \qquad
    \mathcal{D}_2 = B_{1/16} \setminus (\mathcal{D}_1 \cup \mathcal{C}).
\]
Here $\delta_1$ is a small quantity to be chosen later, depending possibly on $|a_K|$. We will assume that $\delta_1\ll \delta_0$.

Given $D \subset \R^d$, we introduce the useful notation
\begin{equation*}
    \begin{aligned}
    \LL[D]w(x)  \coloneqq  \int_D \bigl|w(x)-w(x+z)\bigr|^{p-2}\bigl(w(x)-w(x+z)\bigr)\,|z|^{-d-sp}\dd z.
\end{aligned}
\end{equation*}
We then write the quantity $\II$ from \eqref{eq:evaluate-at-max} as 
\begin{eqnarray*}
   \II &  = &\LL[\mathcal{C}]w_1(x_K,t_K)-\LL[\mathcal{C}]w_2(y_K,\tau_K)\\
    & &+ \LL[\mathcal{D}_1]w_1(x_K,t_K)-\LL[\mathcal{D}_1]w_2(y_K,\tau_K)\\
     & &+  \LL[\mathcal{D}_2]w_1(x_K,t_K)-\LL[\mathcal{D}_2]w_2(y_K,\tau_K)\\
     & &+ \LL[B_{1/16}^c]w_1(x_K,t_K)-\LL[B_{1/16}^c]w_2(y_K,\tau_K)\\
     & \eqqcolon &\II_1 + \II_2 + \II_3 + \II_4.
\end{eqnarray*}

We recall the following estimates in the concavity cone; see \cite[Lemmas 4.4 and 4.5]{JSU26}.

\begin{lemma}\label{l:estimate:l1l2}
Let $(x_K, y_K, t_K,\tau_K)$ satisfy \eqref{eq:MK-geq-ML-obstacle}. Then, for $L$ sufficiently large, the following estimates hold:
\[
    \II_1 \geq c\bigl(L\omega'(|a_K|)\bigr)^{p-2}\int_{\C} |z|^{p-2-d-sp}\delta^2\phi(\cdot, y_K)(x_K,z)\dd z,
\]
and
\[
    \II_2 \geq -C \bigl(L\omega'(|a_K|)\bigr)^{p-1} \delta_1^{p(1-s)}|a_K|^{p(1-s)-1}.
\]
The constants $c$, $C$, and $L$ are universal.
\end{lemma}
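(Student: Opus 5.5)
Both estimates come from expanding $\phi(\cdot,y_K)$ and $\phi(x_K,\cdot)$ near $x_K$ and $y_K$, where $w_1,w_2$ coincide with the smooth test functions. The maximality of $\Psi_K$ at $(x_K,y_K,t_K,\tau_K)$ gives the one-sided comparisons
\[
    w_1(x_K,t_K)-w_1(x_K+z,t_K)\ \ge\ w_2(y_K,\tau_K)-w_2(y_K+z,\tau_K)
\]
for the relevant $z$. These say that the first difference dominates the second. The plan follows \cite[Lemmas 4.4 and 4.5]{JSU26}. The time localization enters only through the constants $\Lambda_1,\Lambda_2$ and the time slices, which do not affect spatial increments at fixed time. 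For this reason the argument there transfers verbatim once the viscosity inequalities \eqref{eq:evaluate-at-max} are available, and I will check each step.

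\emph{Estimate for $\II_1$.} Write $e=a_K/|a_K|$. On the cone $\C$ (which lies in $B_{|a_K|/2}$), the function $\phi(\cdot,y_K)$ is smooth. Its gradient at $x_K$ is
\[
    \eta = L\omega'(|a_K|)e + L_2\nabla\psi(x_K),
\]
and for $L$ large this is comparable to $L\omega'(|a_K|)e$, since $|\nabla\psi|\le C$ and $L_2$ is fixed.

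For $z\in\C$ we have $|\langle \eta,z\rangle|\ge c\,L\omega'(|a_K|)|z|$. I will use symmetric pairing $z\leftrightarrow -z$ on $\C$ (which is symmetric) together with the elementary inequality
\[
    J(a+b)+J(-a+b)\ \ge\ c\,|a|^{p-2}b \qquad\text{for } b\ge0 \text{ small relative to } a,
\]
where $J(r)=|r|^{p-2}r$ and $p>2$. Using the first-order expansion, the symmetrized integrand of $\LL[\C]w_1-\LL[\C]w_2$ reduces to a sum of $J$-terms. Its size is governed by $|\langle\eta,z\rangle|^{p-2}$ times the second difference
\[
    \delta^2\phi(\cdot,y_K)(x_K,z)=\phi(x_K+z,y_K)+\phi(x_K-z,y_K)-2\phi(x_K,y_K),
\]
together with the analogous term at $y_K$. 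The analogous term has the favourable sign because $\omega$ is concave. Bounding $|\langle\eta,z\rangle|^{p-2}\ge c(L\omega'(|a_K|))^{p-2}|z|^{p-2}$ then gives the claim.

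\emph{Estimate for $\II_2$.} On $\mathcal D_1\subset B_{\delta_1|a_K|}$, both $w_1$ and $w_2$ are $C^{1,1}$. The first differences are bounded by $C L\omega'(|a_K|)|z|$, and the second differences by $CL\omega''$-type terms. Because $|\omega''(r)|\lesssim \omega'(r)/r$ for $r\sim|a_K|$, these are at most $CL\omega'(|a_K|)|z|^2/|a_K|$.

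Using the symmetric cancellation of the linear part, each of $\LL[\mathcal D_1]w_i$ is bounded in absolute value by
\[
    C(L\omega')^{p-1}|a_K|^{-1}\int_{B_{\delta_1|a_K|}}|z|^{p-d-sp}\dd z .
\]
Here I use the estimate $|J(a+b)+J(-a+b)|\le C|a|^{p-2}|b|$ for $p>2$. Since $p-sp>0$, the integral equals $C(\delta_1|a_K|)^{p(1-s)}$, which gives $\II_2\ge -C(L\omega')^{p-1}\delta_1^{p(1-s)}|a_K|^{p(1-s)-1}$.

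The main obstacle is the $\II_1$ step. There one must verify that the nonlinear weight $|\langle\eta,z\rangle|^{p-2}$ does not degenerate on $\C$. That requires $L$ large enough that $L_2\nabla\psi$ cannot tilt $\eta$ out of the cone direction. One must also keep the second-order remainder smaller than the first-order term, which holds because $|z|\le|a_K|/2$ and $\delta_0$ is fixed.
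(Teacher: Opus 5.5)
Your overall route is the paper's. The paper gives no separate argument for this lemma and simply invokes \cite[Lemmas 4.4 and 4.5]{JSU26}: the obstacle only decides which viscosity inequality holds at $(x_K,t_K)$, and this lemma uses only the maximality of $\Psi_K$ and the definition of $w_1,w_2$. The viscosity inequalities are not needed here, and your observation that the time terms cancel in spatial increments is correct.

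Your $\II_2$ computation is fine, except that the elementary bound must read $|J(a+b)-J(a-b)|\le C(|a|+|b|)^{p-2}|b|$. As you wrote it, it fails when $a\approx 0$, which happens in $\mathcal D_1$ for directions nearly orthogonal to $a_K$. Since $|a|+|b|\le CL\omega'(|a_K|)|z|$ there, the final bound is unchanged.

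The genuine gap is in the $\II_1$ step. You say $w_1,w_2$ coincide with the smooth test functions where you expand. By definition, however, $w_1(\cdot,t_K)=u(\cdot,t_K)$ outside $B_{\delta_1|a_K|}(x_K)$, while $\C$ extends to radius $|a_K|/2\gg\delta_1|a_K|$. On $\C\setminus B_{\delta_1|a_K|}$, $w_1$ is just $u$, and there is nothing smooth to expand.

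The comparison in your first paragraph does not repair this. It comes from translating $x$ and $y$ jointly, and it omits the term $L_2(\psi(x_K)-\psi(x_K+z))$. More importantly, it only compares increments of $w_1$ with increments of $w_2$, which gives no quantitative positive lower bound and no $\delta^2\phi$.

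What is needed is maximality of $\Psi_K$ in $x$ alone and in $y$ alone. For $z\in\C$ (with $x_K+z$, $y_K+z$ in the domain of maximization), this gives
$w_1(x_K,t_K)-w_1(x_K+z,t_K)\ge\phi(x_K,y_K)-\phi(x_K+z,y_K)$ and $w_2(y_K,\tau_K)-w_2(y_K+z,\tau_K)\le\phi(x_K,y_K+z)-\phi(x_K,y_K)$. Monotonicity of $J$ then yields
\[
    \II_1\ \ge\ \int_{\C}\Bigl[J\bigl(\phi(x_K,y_K)-\phi(x_K+z,y_K)\bigr)-J\bigl(\phi(x_K,y_K+z)-\phi(x_K,y_K)\bigr)\Bigr]\frac{\dd z}{|z|^{d+sp}},
\]
and only this expression can be symmetrized. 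This is exactly why the lemma is a one-sided estimate.

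When you then symmetrize, the inequality $J(a+b)-J(a-b)\ge c|a|^{p-2}b$ requires $b\ge 0$, that is, $\delta^2\phi(\cdot,y_K)(x_K,z)\le 0$ on all of $\C$. So the radial concavity $L|\omega''(|a_K|)|$ must dominate both $L_2|D^2\psi|$ and the tangential convexity $\delta_0^2L\omega'(|a_K|)/|a_K|$. This, and not only the tilt of the gradient, is where ``$L$ large'' and the smallness of the aperture $\delta_0$ enter.

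Finally, with your convention $\delta^2\phi=\phi(x_K+z,y_K)+\phi(x_K-z,y_K)-2\phi(x_K,y_K)$, the argument gives
$\II_1\ge -c\,(L\omega'(|a_K|))^{p-2}\int_{\C}|z|^{p-2-d-sp}\,\delta^2\phi(\cdot,y_K)(x_K,z)\dd z\ge 0$.
With that convention the displayed statement would be a useless negative lower bound, so it should be read with the opposite sign convention for $\delta^2$.
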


We now bound the term in $\mathcal{D}_2$.

\begin{lemma}[Estimate in $\mathcal{D}_2$]\label{l:estimate_I3}
Suppose that $u$ satisfies \eqref{assumption:bat-ind} for some
\[
    \kappa\in[0,\min\{sp/(p-2),1\}).
\]
Let $K$ be large enough to satisfy \eqref{eq:rusbad}. Then, for any $\theta\in(0,1)$ and for any $L$ large enough universal, we have
\begin{equation}\label{eq:estimate_I3}
    \begin{aligned}
\II_3 \ge -C  \biggl[
&\int_{\delta_1|a_K|}^{|a_K|^{\theta}} r^{\kappa(p-2)+1-sp}\,\dd r
\allowbreak + |a_K|^{\frac{m-1}{m}\kappa}
\int_{\delta_1|a_K|}^{|a_K|^{\theta}} r^{\kappa(p-2)-sp}\,\dd r  \\
&\allowbreak + |a_K|^{\kappa+\theta(\kappa(p-2)-sp)}
\biggr].
\end{aligned}
\end{equation}
Here, $C$ depends on $\kappa, [u]_{C_x^{0,\kappa}}$ and universal constants.
\end{lemma}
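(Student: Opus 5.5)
I plan to estimate $\II_3$ pointwise in $z$, by comparing the two integrands over $\mathcal{D}_2$, and to split $\mathcal{D}_2$ radially at $|z|=|a_K|^\theta$. Write $\delta_z w(x)\coloneqq w(x)-w(x+z)$.

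\textbf{Where the test functions coincide with $u$.} On $\mathcal{D}_2$ we have $z\notin B_{\delta_1|a_K|}$. Hence $w_1(x_K+z,t_K)=u(x_K+z,t_K)$ and $w_2(y_K+z,\tau_K)=u(y_K+z,\tau_K)$. At the base points, $w_1(x_K,t_K)=u(x_K,t_K)$ and $w_2(y_K,\tau_K)=u(y_K,\tau_K)$. So the integrands are $J_p(\delta_z u(x_K,t_K))$ and $J_p(\delta_z u(y_K,\tau_K))$, with $J_p(r)=|r|^{p-2}r$.

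\textbf{The maximality inequality.} I use the maximality of $\Psi_K$ at $(x_K,y_K,t_K,\tau_K)$ with the competitor $(x_K+z,y_K+z,t_K,\tau_K)$. This is admissible for $|z|\le 1/16$ after shrinking the localization, since $x_K,y_K$ stay in a compact subset of $B_1$. The $\omega$-term and the time terms are unchanged, so
\[
    \delta_z u(x_K,t_K)-\delta_z u(y_K,\tau_K)\ \ge\ L_2\bigl(\psi(x_K)-\psi(x_K+z)\bigr)\ \ge\ -L_2|\nabla\psi(\xi)||z|.
\]
Here $\xi$ lies on the segment $[x_K,x_K+z]$. The bound $|\nabla\psi|\le C\psi^{(m-1)/m}$ controls the right-hand side. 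In addition, $L_2\psi(x_K)\le u(x_K,t_K)-u(y_K,\tau_K)\le C(\kappa)|a_K|^\kappa+|a_K|$, by \eqref{assumption:bat-ind} and \eqref{eq:rusbem}. Together these give
\[
    \delta_z u(x_K,t_K)-\delta_z u(y_K,\tau_K)\ \ge\ -C\bigl(|a_K|^{\frac{m-1}{m}\kappa}|z|+|z|^{2}\bigr)\eqqcolon -E(z),
\]
where the $|z|^2$ term handles the variation of $\psi$ along the segment.

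\textbf{Inner region $\delta_1|a_K|\le|z|\le|a_K|^\theta$.} Since $J_p$ is monotone, $J_p(A)-J_p(B)\ge -C(|A|+|B|)^{p-2}E(z)$ whenever $A-B\ge -E(z)$, with $p>2$. By \eqref{assumption:bat-ind} and \eqref{eq:rusbem}, $|A|,|B|\le C|z|^\kappa$; the time shift is irrelevant because both increments are taken at fixed times. The integrand is therefore bounded below by
\[
    -C|z|^{\kappa(p-2)}\bigl(|a_K|^{\frac{m-1}{m}\kappa}|z|+|z|^2\bigr)|z|^{-d-sp}.
\]
Integrating in polar coordinates gives exactly the first two integrals in \eqref{eq:estimate_I3}.

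\textbf{Outer region $|a_K|^\theta\le|z|\le1/16$.} Here I do not use the gradient bound. Instead I use $|\delta_z u(x_K,t_K)-\delta_z u(y_K,\tau_K)|\le 2C_1|a_K|^\kappa+2|a_K|\le C|a_K|^\kappa$, from \eqref{assumption:bat-ind} together with \eqref{eq:rusbem}. Applying the same elementary inequality, the integrand is bounded below by $-C|a_K|^\kappa|z|^{\kappa(p-2)-d-sp}$. Since $\kappa<sp/(p-2)$, the exponent $\kappa(p-2)-sp$ is negative, so the radial integral is dominated by its lower endpoint. This gives $C|a_K|^{\kappa+\theta(\kappa(p-2)-sp)}$, the third term.

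\textbf{Main obstacle.} The delicate step is justifying the shifted competitor: I must check that $x_K+z$ and $y_K+z$ stay in $\overline{B_1}$, using $L_2\psi(x_K)<2$ with $L_2$ large. This keeps $x_K$ inside a ball where $\psi$ is small. Beyond that, the second-order Taylor control of $\psi$ must be uniform; choosing $m>2$ handles this.
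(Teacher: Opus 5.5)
Your proposal is correct and is essentially the argument the paper invokes from \cite[Lemma~4.6]{JSU26}. You shift both points by $z$ in the maximality of $\Psi_K$, control the localization term on $\delta_1|a_K|\le|z|\le|a_K|^\theta$ via $|\nabla\psi|\le C\psi^{(m-1)/m}$ and $L_2\psi(x_K)\le C|a_K|^\kappa$, and use the spatial $\kappa$-H\"older bound on the outer annulus, with \eqref{eq:rusbem} replacing the time H\"older continuity exactly as the paper indicates. One small correction: the $|z|^2$ term comes from $|\nabla\psi(\xi)|\le|\nabla\psi(x_K)|+\|D^2\psi\|_\infty|z|$ (this is where $m>2$ is used), and the bound $|\nabla\psi|\le C\psi^{(m-1)/m}$ must then be applied at $x_K$, not at $\xi$.
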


\begin{proof}
The proof is identical to that of \cite[Lemma 4.6]{JSU26}. We point out that the proof there relies on H\"older continuity in the time variable; in the present setting, this is replaced by \eqref{eq:rusbem}. The argument remains valid when $\kappa=0$.
\end{proof}

Finally, we control the term involving the tail. Here, we use Assumption \eqref{eq:rusbad} in a crucial way. 

\begin{lemma}[Estimate on $B_{1/16}^c$]\label{l:estimate_I4}
Suppose $u$ satisfies Assumption \eqref{assumption:bat-ind} for some $\kappa\in(0,1]$ and let $K$ be large enough to satisfy \eqref{eq:rusbad}. For $L$ large enough, we have
\begin{align*}
    \II_4\geq -C |a_K|^\kappa,
\end{align*}
where  $C$ depends on universal constants and $[u]_{C^{\kappa}_x}$.
\end{lemma}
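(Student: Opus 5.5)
We need to bound $\II_4 = \LL[B_{1/16}^c]w_1(x_K,t_K)-\LL[B_{1/16}^c]w_2(y_K,\tau_K)$ from below. Since $\delta_1|a_K|<1/16$, on $B_{1/16}^c$ the test functions coincide with $u$ at the shifted points. At the base points, $w_1(x_K,t_K)=u(x_K,t_K)$ and $w_2(y_K,\tau_K)=u(y_K,\tau_K)$. Thus
\[
\II_4=\int_{B_{1/16}^c}\Bigl[J\bigl(u(x_K,t_K)-u(x_K+z,t_K)\bigr)-J\bigl(u(y_K,\tau_K)-u(y_K+z,\tau_K)\bigr)\Bigr]\frac{\dd z}{|z|^{d+sp}},
\]
with $J(r)=|r|^{p-2}r$. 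The plan is to compare the two arguments of $J$ through an intermediate term at the common time $\tau_K$. We set $A=u(x_K,t_K)-u(x_K+z,t_K)$ and $B=u(y_K,\tau_K)-u(y_K+z,\tau_K)$, and write
\[
A-B=\bigl[u(x_K,t_K)-u(y_K,\tau_K)\bigr]-\bigl[u(x_K+z,t_K)-u(y_K+z,t_K)\bigr]-\bigl[u(y_K+z,t_K)-u(y_K+z,\tau_K)\bigr].
\]
The first bracket is positive, by \eqref{eq:positive-max-obstacle} in its doubled-time form (since $M_K>0$). Because $J$ is monotone, it can be discarded, or more precisely it only helps. This leaves us to control the bracketed differences in space and in time.

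Since $p>2$, we use the elementary inequality $|J(A)-J(B)|\le C|A-B|(|A|+|B|)^{p-2}$. We then need bounds for the two remaining pieces. The spatial difference $u(x_K+z,t_K)-u(y_K+z,t_K)$ splits according to where $x_K+z$ and $y_K+z$ lie. Where both points are in $B_1$, \eqref{assumption:bat-ind} gives the bound $C_1|a_K|^\kappa$. Where they fall outside $B_1$, where no Hölder bound is available, we handle it as in \cite[Lemma 4.7]{JSU26}. We change variables $z\mapsto z+a_K$ in one of the integrals and note that the kernels $|z|^{-d-sp}$ and $|z-a_K|^{-d-sp}$ differ by $O(|a_K|\,|z|^{-d-sp-1})$ for $|z|\ge1/16\gg|a_K|$. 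We also note that the symmetric difference of the domains $B_{1/16}^c$ and $B_{1/16}^c+a_K$ has measure $O(|a_K|)$. In both cases the error is controlled by the normalization \eqref{assumption:normalized-setting}, i.e.\ by $\|u\|_{L^\infty}$ and the tail $\|u\|_{L^{p-1}_{sp}}$, times $|a_K|\le C|a_K|^\kappa$.

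The time difference is controlled by \eqref{eq:rusbad}. We apply Hölder's inequality with exponents $p-1$ and $(p-1)/(p-2)$ against the measure $|z|^{-d-sp}\dd z$ on $B_{1/16}^c$:
\[
\int_{B_{1/16}^c}|u(y_K+z,t_K)-u(y_K+z,\tau_K)|(|A|+|B|)^{p-2}\frac{\dd z}{|z|^{d+sp}}\le \Bigl(\int |\cdot|^{p-1}\Bigr)^{\frac1{p-1}}\Bigl(\int (|A|+|B|)^{p-1}\Bigr)^{\frac{p-2}{p-1}}.
\]
By \eqref{eq:rusbad}, the first factor is at most $|a_K|$. The second factor is bounded by a universal constant, since $\int_{B_{1/16}^c}|u(x+z,t)|^{p-1}|z|^{-d-sp}\dd z\le C(1+\|u(\cdot,t)\|^{p-1}_{L^{p-1}_{sp}})$ for $|x|<1$. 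Hence this contribution is at least $-C|a_K|\ge -C|a_K|^\kappa$.

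The main obstacle is the spatial piece away from $B_1$, which requires the change-of-variables bookkeeping. I would first try to follow \cite[Lemma 4.7]{JSU26} verbatim, with the Hölder-in-time input replaced by \eqref{eq:rusbad}. Summing the contributions then gives $\II_4\ge -C|a_K|^\kappa$, with $C$ depending on $[u]_{C^\kappa_x}$ and universal constants.
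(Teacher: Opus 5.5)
Your proposal is correct and is essentially the paper's argument. The paper just runs \cite[Lemma 4.7]{JSU26} with the H\"older-in-time input replaced by the large-$K$ smallness conditions, and your sketch contains the same ingredients: monotonicity of $J$ to discard $u(x_K,t_K)-u(y_K,\tau_K)>0$, an intermediate value at time $t_K$, H\"older's inequality with \eqref{eq:rusbad} for the time increment, and \eqref{assumption:bat-ind} in $B_1$ plus a kernel-translation argument for the spatial increment.

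Using \eqref{eq:rusbad} rather than the pointwise bound \eqref{eq:rusbem} cited in the paper's proof is the right input, since only \eqref{eq:rusbad} controls the part of $B_{1/16}^c$ outside $B_1$. One point to make explicit is that the symmetric-difference region in your translation step lies inside $B_1$, because $|a_K|$ is small and the localization $L_2\psi$ keeps $x_K$ away from $\partial B_1$. That is what justifies the $L^\infty$ bound there, and hence the $O(|a_K|)$ error.
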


\begin{proof}
The proof is identical to that of \cite[Lemma 4.7]{JSU26}. When $\kappa=0$, the argument simplifies, since it suffices to show that $\II_4$ is bounded by a constant. As in the previous estimate, the H\"older continuity estimate in time is replaced here by \eqref{eq:rusbem}.
\end{proof}

We collect the previous estimates and prove the Lipschitz regularity in the spatial variable. 

\begin{theorem}[Lipschitz in space]\label{t:lip-in-space_Chibrata}
Let $u$ be a viscosity solution to \eqref{eq:parabolic-obstacle-problem} in $Q_1$. Suppose further that $u \in C_{\rm loc}(-1,0;L^{p-1}_{sp}(\R^d))$ and that \eqref{assumption:normalized-setting} and \eqref{assumption:bat-ind} are in force. Then, $u$ is Lipschitz in space at $t_0$ and satisfies
\[
    |u(x,t_0)-u(y,t_0)|\leq C|x-y|, \quad \text{for all }  x,y \in B_{1/2}.
\]
The constant \(C\) depends only on universal parameters, and \([\varphi]_{C^{0,1}_x}\).
\end{theorem}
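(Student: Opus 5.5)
The plan is to combine the lemmas above with the lower bound on \(\II_1\) coming from the strict concavity of \(\omega\), run the argument first with the Hölder modulus \(\omega_\gamma\), and then bootstrap \(\kappa\) up to Lipschitz.

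\textbf{Step 1: the inequality to contradict.} Fix \(L_2\), \(m\) and \(\beta^\ast\) large and universal, and take \(L\) so large that \eqref{eq:L-obstacle-choice} holds. Then \((x_K,t_K)\notin\Lambda_\varphi(u)\) for large \(K\), so \eqref{eq:evaluate-at-max} is available. Choosing \(L_2\) large forces \(\bar x\) (hence \(x_K\)) to stay away from \(\partial B_1\), since \(L_2\psi\) dominates \(2\|u\|_\infty\) near the boundary. Likewise \(\bar t\) stays away from \(-1\), because \(L_2|t|^{1+\beta^\ast}\) is large there. Also \(|a_K|\) is small: from \eqref{eq:positive-max-obstacle}, \(L\omega(|a_K|)\le 2\), so \(|a_K|\lesssim L^{-1}\), using \eqref{eq:omega-linear-lower-bound}. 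Combining \eqref{eq:evaluate-at-max} with Lemmas \ref{l:estimate_time}–\ref{l:estimate_I4} gives
\[
\II_1 \le C|a_K|^{\frac{\kappa\beta^\ast}{1+\beta^\ast}} + C\bigl(L\omega'(|a_K|)\bigr)^{p-1}\delta_1^{p(1-s)}|a_K|^{q_c} + (\text{right side of }\eqref{eq:estimate_I3}) + C|a_K|^\kappa .
\]

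\textbf{Step 2: the cone term.} As in \cite{JSU26}, the second-order increment \(\delta^2\phi(\cdot,y_K)(x_K,z)\) inside \(\C\) is controlled by \(L\omega''(|a_K|)|z|^2\), with a harmless error from \(L_2\psi\) thanks to \(|\nabla\psi|\le C\psi^{(m-1)/m}\). Integrating over \(\C\), whose aperture is \(\delta_0\) and radius \(|a_K|/2\), gives
\[
\II_1 \ge c\,\bigl(L\omega'(|a_K|)\bigr)^{p-2}\,L|\omega''(|a_K|)|\,|a_K|^{p-sp},
\]
with \(\delta_0\) fixed. For \(\omega_\gamma\) this is \(cL^{p-1}|a_K|^{(\gamma-1)(p-1)-1+p-sp}\). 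For \(\widetilde\omega\) it is \(cL^{p-1}|a_K|^{q_c}/|\log|a_K||^2\).

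\textbf{Step 3: balancing, then bootstrapping.} Choose \(\delta_1=|a_K|^{\epsilon}\cdot\) (a small constant), so that the \(\II_2\) error is a small multiple of the \(\II_1\) lower bound. Choose \(\theta\) close to \(1\) so that the \(\II_3\) integrals are bounded by powers of \(|a_K|\) strictly larger than the exponent in the \(\II_1\) bound. Since \(\II_1\) carries the factor \(L^{p-1}\) while the errors carry at most \(L^{p-1}\delta_1^{p(1-s)}\) or no power of \(L\), and \(|a_K|\le C/L\), taking \(L\) large yields a contradiction. This shows \(M_L\le0\), i.e.\ \(u(\cdot,0)\in C^{0,\gamma}\).

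This step is the main obstacle. Starting from \(\kappa=0\), the admissible \(\gamma\) is limited by the exponents of the \(\II_3\) and time terms, so each step improves \(\kappa\) to roughly \(\kappa+\) a fixed increment, capped by \(\min\{1, sp/(p-2)\}^-\). The time shift \(t_0=0\) is arbitrary, and the localization transfers the estimate to all \(t_0\), giving \eqref{assumption:bat-ind} with the new exponent. After finitely many steps \(\kappa\) is close to \(1\).

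The final step uses \(\widetilde\omega\). Here the hypothesis \(q_c<1\) and \(\kappa\) close to \(1\) make every error term of order \(|a_K|^{\kappa\beta^\ast/(1+\beta^\ast)}\) or \(|a_K|^{q_c+\eta}\) with \(\eta>0\). For \(\beta^\ast\) large and \(\kappa\) close enough to \(1\), the time-term exponent exceeds \(q_c\), because \(q_c<1\). These terms are therefore dominated by \(L^{p-1}|a_K|^{q_c}|\log|a_K||^{-2}\), and the contradiction gives the Lipschitz bound with \(C\) depending on \([\varphi]_{C^{0,1}_x}\) through \eqref{eq:L-obstacle-choice}.
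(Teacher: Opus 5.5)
This is essentially the paper's argument, which defers to \cite[Theorem~4.1]{JSU26}: $L$ is taken as in \eqref{eq:L-obstacle-choice} so that $(x_K,t_K)\notin\Lambda_\varphi(u)$, the lemmas are combined into an upper bound for $\II_1$, and the H\"older exponent is bootstrapped with $\omega_\gamma$ before closing with $\widetilde\omega$; the paper pauses at the critical exponent $\bar\gamma=\min\{1,sp/(p-1)\}$ while your single iteration crosses it, which is only a difference in bookkeeping since the error exponents stay strictly above $\gamma(p-1)-sp$ there. One correction: in the final step the cone aperture $\delta_0$ cannot be kept fixed, because otherwise the transversal part $\widetilde\omega'(|a_K|)\,\delta_0^2/|a_K|$ of the second difference dominates $|\widetilde\omega''(|a_K|)|\sim(|a_K|\log^2|a_K|)^{-1}$; you need $\delta_0\lesssim|\log|a_K||^{-1}$ (and then $\delta_1$ a small power of $|a_K|$), which costs only extra logarithmic factors in the $\II_1$ lower bound that your strict exponent gaps absorb.
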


\begin{proof}
The proof follows the same steps as \cite[Theorem 4.1]{JSU26}. 

Denote $\bar \gamma \coloneqq \min\{1,sp/(p-1)\}$. For each fixed $L$, we take $K$ large enough to satisfy \eqref{eq:rusbad} and \eqref{eq:rusbem}.

The proof is split into the following five steps by combining Lemmas \ref{l:estimate_time}, \ref{l:estimate:l1l2}, \ref{l:estimate_I3}, and \ref{l:estimate_I4}, assuming that $t_0=0$ in each step. 

\begin{enumerate}
    \item We show that $u\in C^{0,\bar \gamma^-}_x$ locally,
    \item We show that $u\in C^{0,\bar \gamma}_x$ locally when $\bar \gamma<1$,
    \item We show that $u\in C^{0,1}_x$ locally when $\bar \gamma = 1$,
    \item We show that $u\in C^{0,1^-}_x$ locally when $\bar \gamma<1$,
    \item We show that $u\in C^{0,1}_x$ locally when $\bar \gamma < 1$.
\end{enumerate}

In the first step, when $\kappa=0$, the same argument as in \cite{JSU26} still applies. The choice of $m$ to apply estimate \eqref{eq:estimate_I3} in Lemma \ref{l:estimate_I3} is not important in this case. The remaining steps are identical.
\end{proof}

\section{Time regularity and proof of the main theorem}\label{s:time:reg}

In this section, we prove the regularity result with respect to time and conclude with the proof of the main theorem. 

\begin{proposition}\label{p:time-regularity-estimate-phip-desc}
Assume $u$ is a viscosity solution to \eqref{eq:parabolic-obstacle-problem} in $Q_1$ satisfying \eqref{assumption:normalized-setting}. In addition, suppose that \eqref{assumption:bat-ind} holds with $\kappa = 1$. Then, there exists a constant $C>0$ such that
\[
    \sup_{x \in B_{1/2}}|u(x,t_1) - u(x,t_2)| \leq C |t_1-t_2|^{\alpha},
\]
for every $t_1,t_2 \in (-1/4,0]$.  The exponent $\alpha$ is given by
\begin{align*}
    \alpha = \min\left\{ \left( \dfrac{1}{1 - q_c}\right)^-,\alpha_\varphi \right\},
\end{align*}
where $q_c \coloneqq - 1+p(1-s)$.  The constant \(C\) depends only on universal parameters, $\alpha$, \([\varphi]_{C^{0,1}_x}\), and \([\varphi]_{C^{0,\alpha_\varphi}_t}\).

In particular, $u$ is Lipschitz in time when $q_c>0$ and $\alpha_\varphi=1$.
\end{proposition}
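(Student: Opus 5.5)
By translation invariance in time, we fix $x_0\in B_{1/2}$ and $t_1\in(-1/4,0]$, set $h\coloneqq t_2-t_1>0$ small, and show $u(x_0,t_2)-u(x_0,t_1)\le Ch^\alpha$. The reverse inequality comes from the symmetric argument with a supersolution barrier, which uses only $u\ge\varphi$ and the supersolution property and is therefore simpler. The approach follows \cite{JSU26}: we use the spatial Lipschitz bound \eqref{assumption:bat-ind} with $\kappa=1$ to build an explicit barrier and compare it with $u$ on a small cylinder of radius $r$, which is chosen at the end in terms of $h$.

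\textbf{Barrier.} We take
\[
    b(x,t)\coloneqq u(x_0,t_1)+A r+ \frac{A}{r}\,\eta(x-x_0)+\Lambda (t-t_1),
\]
where $\eta$ is smooth, $\eta(z)=|z|^2$ for $|z|\le r$, and $\eta$ saturates outside $B_{2r}$ (or $\eta=r^{-1}|z|^2$ localized). The constant $A$ exceeds the spatial Lipschitz constant, so that $b\ge u$ on $\partial B_r(x_0)\times(t_1,t_1+h]$ and at $t=t_1$; indeed $|u(x,t_1)-u(x_0,t_1)|\le C_1|x-x_0|\le C_1 r$. We choose $b=u$ outside $B_r(x_0)$, or rather take $b\ge u$ there, using the Lipschitz bound and the normalization. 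We also need $b$ to be a supersolution of the unconstrained equation in $B_r(x_0)\times(t_1,t_1+h]$, which requires
\[
    \Lambda\ \ge\ \sup |(-\Delta_p)^s b|.
\]
With $u$ Lipschitz and $b$ having $C^{1,1}$ norm of order $A/r$ near $x_0$, the scaling estimate splits the integral into $B_r$, the annulus up to distance $1/16$, and the tail. The near part gives $(A/r)^{p-1} r^{p(1-s)} r^{-1}\cdot\ldots$, which reduces to $Cr^{q_c-1}$ once $q_c<1$. The annulus gives $\int_r^{1} \rho^{p-2-sp}\,d\rho\lesssim 1+r^{q_c}$, up to logarithms when $q_c=0$, hence the $(\cdot)^-$ in the exponent. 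The tail is bounded by the normalization. So we expect to take $\Lambda\simeq C r^{q_c-1}$.

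\textbf{Obstacle.} For the comparison we want to apply Theorem \ref{t:comparison-viscosity} with $u$ viewed as a subsolution of the obstacle problem against $b$ as a supersolution. However, $b$ must itself be admissible, meaning $b\ge\varphi$, and the contact points are where $u$ fails to be a subsolution of the bare equation. Both issues are handled by using $u\ge\varphi$ together with the regularity of $\varphi$. On $B_r(x_0)\times[t_1,t_1+h]$ we have
\[
    \varphi(x,t)\le \varphi(x_0,t_1)+[\varphi]_{C^{0,1}_x}r+[\varphi]_{C^{0,\alpha_\varphi}_t}h^{\alpha_\varphi}\le u(x_0,t_1)+[\varphi]_{C^{0,1}_x}r+[\varphi]_{C^{0,\alpha_\varphi}_t}h^{\alpha_\varphi}.
\]
Enlarging $A$ by $[\varphi]_{C^{0,1}_x}$ and adding $[\varphi]_{C^{0,\alpha_\varphi}_t}h^{\alpha_\varphi}$ to the constant in $b$ makes $b>\varphi$ strictly on the cylinder. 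Then $\min\{b-\varphi,\partial_t b+(-\Delta_p)^s b\}\ge0$, so $b$ is a supersolution of \eqref{eq:parabolic-obstacle-problem}. Since $u$ is a solution, hence a subsolution, the comparison principle (whose proof already pushes the subsolution maximum off the contact set) gives $u\le b$ in the cylinder. This step, ensuring $b$ stays strictly above $\varphi$ while still absorbing the boundary and tail data, is the main obstacle. In particular, making sure the ordering $b\ge u$ on $(\R^d\setminus B_r(x_0))\times(t_1,t_1+h]$ holds requires modifying $b$ outside $B_r$ to equal $\max\{b,u\}$ (or $u+$const) without destroying the operator bound. We handle this as in \cite{JSU26}: we only need a pointwise upper bound of $(-\Delta_p)^s b$, which only improves when $b$ is lowered outside.

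\textbf{Optimization.} Evaluating at $x_0$ gives
\[
    u(x_0,t_2)-u(x_0,t_1)\le C\bigl(r+r^{q_c-1}h+h^{\alpha_\varphi}\bigr).
\]
We choose $r=h^{1/(2-q_c)}$, which balances $r=r^{q_c-1}h$; this yields the bound $h^{1/(2-q_c)}$. We then check that this matches $\bigl(\tfrac1{1-q_c}\bigr)^-$ after the rescaling $\mathcal M^{p-2}$ of time, and otherwise adjust the $\eta$ profile, e.g. using $|z|^{1+\epsilon}$-type growth, until the exponent obtained is $(1/(1-q_c))^-$, capped at $1$. When $q_c>0$ we take $r$ of order $1$ and conclude Lipschitz continuity in time directly. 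Finally, taking the minimum with $\alpha_\varphi$ gives $\alpha$.
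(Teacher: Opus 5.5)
\textbf{There is a genuine gap: the comparison step does not close.} Your barrier is anchored at the value $u(x_0,t_1)$ and rises only by about $Ar$ on $B_r(x_0)$; if saturated, it stays at that height outside $B_{2r}$. To apply Theorem \ref{t:comparison-viscosity} on $B_r(x_0)\times(t_1,t_1+h]$, you need $u\le b$ on $(\R^d\setminus B_r(x_0))\times(t_1,t_1+h]$. In particular, this requires $u(x,t)-u(x_0,t_1)\le 2Ar+\Lambda(t-t_1)$ for $|x-x_0|= r$ and $t>t_1$. Your justification only checks $t=t_1$. The spatial Lipschitz bound controls $u(x,t)-u(x_0,t)$, but the remaining piece $u(x_0,t)-u(x_0,t_1)$ is exactly the time increment you are trying to estimate. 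At times $t>t_1$ the only a priori information is $\|u\|_{L^\infty}\le 1$.

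The same circularity is hidden in your annulus bound $\int_r^1\rho^{q_c-1}\dd\rho$, which uses $|b(x,t)-u(x+z,t)|\lesssim|z|$ for $t>t_1$. Your proposed remedy also has the wrong sign. Since $b$ must be a supersolution, you need a \emph{lower} bound on $\partial_t b+(-\Delta_p)^s b$. Raising $b$ to $\max\{b,u\}$ outside $B_r(x_0)$ \emph{decreases} $(-\Delta_p)^s b$ at interior points, by an amount governed by the unknown time oscillation.

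The paper avoids this by working at unit scale. It takes $\Psi_\eta=\eta+L_1|x|^\gamma+L_0(t-t_0)+A(t-t_0)^{\alpha_\varphi}$ on all of $B_1$ and sets $v=u-u(0,t_0)$ outside $B_1$. The exterior ordering is then trivial, and the tail enters only through $(-\Delta_p)^s v\ge -CL_1^{p-1}$. Taking $L_1\ge 2^{1+\gamma}$ gives $\Psi_\eta\ge 2\ge u-u(0,t_0)$ on $(\overline{B}_1\setminus B_{1/2})\times(t_0,0]$ from the $L^\infty$ bound alone. The small scale is carried by the offset $\eta$, not by a radius. The relation $L_1=C\eta^{1-\gamma}$ is what gives $C|x|\le \eta+L_1|x|^\gamma$ at $t=t_0$.

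\textbf{The exponent also does not come out, and this is tied to your choice of profile.} Even granting your operator bound, your optimization yields $h^{1/(2-q_c)}$, and $1/(2-q_c)<1/(1-q_c)$. Rescaling time by $\mathcal M^{p-2}$ does not change exponents, and ``adjusting the profile until the exponent is right'' is not an argument. In the paper, the exponent is fixed by taking $\gamma=\max\{(sp/(p-1))^+,1\}$ (note $1-q_c/(p-1)=sp/(p-1)$). This is the smallest power for which $(-\Delta_p)^s|x|^\gamma$ is locally bounded. Optimizing $\eta+C\eta^{(1-\gamma)(p-1)}h$ in $\eta$ then gives $1/(1+(p-1)(\gamma-1))=(1/(1-q_c))^-$. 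A quadratic profile corresponds to $\gamma=2$: once it is made large enough to dominate the oscillation of $u$ at unit scale, as the boundary issue requires, it yields only $h^{1/p}$.

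Your Lipschitz claim for $q_c>0$ also fails. With $r\sim 1$, the term $Ar$ is of order one, so the bound is not $O(h)$. In the paper, $\gamma=1$ makes $L_1$ independent of $\eta$, so one can let $\eta\to 0$.

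Your handling of the obstacle is fine. Raising the barrier by $[\varphi]_{C^{0,\alpha_\varphi}_t}h^{\alpha_\varphi}$ and enlarging the spatial slope so that $b>\varphi$ plays the same role as the term $A(t-t_0)^{\alpha_\varphi}$ in the paper.
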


\begin{proof}
Fix $t_0 \in (-1/4,0)$ and define
\[
    \Psi_\eta(x,t) \coloneqq \eta + L_0(t-t_0) + L_1|x|^\gamma+A(t-t_0)^{\alpha_\varphi},
\]
for $\eta>0$ to be chosen. The constant $A$ only needs to satisfy $A>[\varphi]_{C^{0,\alpha_\varphi}_t}$. The parameters $L_0$ and $L_1$ depend on universal quantities, $[\varphi]_{C^{0,1}_x}$, $\eta$, and
\[
    \gamma = \max\left\{\left(1 - \frac{q_c}{p-1} \right)^+,1\right\}.
\]
These choices ensure that the local part of $(-\Delta_p)^s\Psi_\eta$ is finite.

Let us compare the values of the difference $u(x,t) - u(0,t_0)$ with $\Psi_\eta$ on the region $(\overline{B_1} \setminus B_{1/2}) \times (t_0,0]$ and $B_1 \times \{t_0\}$. For $x \in \overline{B_1} \backslash B_{1/2}$, we have
\[
    u(x,t) - u(0,t_0) \leq 2\|u\|_{L^\infty(Q_1)}\leq 2,
\]
and so for $L_1 \geq 2^{1+\gamma}$, we have
\[
    u(x,t) - u(0,t_0) \leq \Psi_\eta(x,t).
\]
For $x \in B_1$ and $t=t_0$, we prove that by making $L_1$ large enough, we have
\[
    u(x,t_0) - u(0,t_0) \leq \Psi_\eta(x,t_0) = \eta + L_1|x|^\gamma.
\]
Indeed, since $u$ is Lipschitz in space, we know that there is a constant $C=C([\varphi]_{C^{0,1}_x})>0$ such that
\[
    |u(x,t_0) - u(0,t_0)| \leq C|x|, \quad \text{for }  x \in B_{1/2},
\]
and thus we only need to pick $L_1$ such that
\[
    C|x| \leq \eta + L_1|x|^\gamma, \quad \text{for } x \in B_{1/2}.
\]
If we choose
\[
    L_1 \geq C\left(\frac{\eta}{C} \right)^{1-\gamma},
\]
then we have, for $|x| > \eta/C$, that
\[
    C|x| = C |x|^{1 - \gamma}|x|^\gamma < C \left(\frac{\eta}{C}\right)^{1-\gamma}|x|^\gamma.
\]
On the other hand, for $|x|\leq \eta/C$, we directly have
\[
   C|x| \leq \eta \leq \eta + L_1|x|^\gamma. 
\]
Therefore, the constant $L_1$ has to satisfy
\[
    L_1 \geq \max\left\{2^{1+\gamma}, C\left(\frac{\eta}{C} \right)^{1-\gamma} \right\},
\]
and so we choose $L_1 \geq C\eta^{1-\gamma}$, for $C$ large enough, depending on universal quantities and $[\varphi]_{C^{0,1}_x}$. Note that the above computations simplify considerably when $\gamma=1$, but the same conclusions still hold. We have shown that
\begin{align}\label{eq:compa_psi}
    u- u(0,t_0) \leq \Psi_\eta \quad \text{on} \quad \left(\overline{B}_1 \backslash B_{1/2}\right) \times (t_0,0] \quad \text{and} \quad B_1 \times \{ t_0\}. 
\end{align}
Defining 
\begin{align*}
    v(x,t) \coloneqq \begin{cases}
     \Psi_\eta(x,t) & \text{ for } (x,t) \in B_1 \times (t_0,0],\\
    u(x,t)- u(0,t_0) & \text{ otherwise},
\end{cases}
\end{align*}
it follows from \eqref{eq:compa_psi} that
\[
    u- u(0,t_0) \leq v  \quad \text{on} \quad \left(\R^d \backslash B_{1/2}\right) \times (t_0,0] \quad \text{and} \quad B_1 \times \{t_0\}.
\]
We wish to use the comparison principle to prove that this inequality propagates to the interior cylinder $B_{1/2} \times (t_0,0]$. To do so, we recall the computations in \cite[Proposition 5.1]{JSU26} in which we showed that
\[
    (-\Delta_p)^s v \geq -CL_1^{p-1} \quad \text{in} \quad B_{1/2} \times (t_0,0].
\]
Therefore, choosing $L_0=C_1 L_1^{p-1}$ where $C_1$ is large, universal, we have
\[
    \p_t \Psi_\eta + (-\Delta_p)^s v \geq L_0 - CL_1^{p-1} + A \alpha_\varphi(t-t_0)^{\alpha_\varphi - 1} > 0,
\]
in $B_{1/2} \times (t_0,0]$. We also show that $\Psi_\eta$ lies strictly above the translated obstacle $\varphi - u(0,t_0)$. Indeed,
\begin{align*}
    \varphi(x,t) - u(0,t_0) & \leq \varphi(x,t) - \varphi(0,t_0)\\
    & \leq [\varphi]_{C^{0,1}_x}|x| + [\varphi]_{C^{0,\alpha_\varphi}_t}(t-t_0)^{\alpha_\varphi}\\
    & < \Psi_\eta(x,t),
\end{align*}
where the last inequality follows from $A>[\varphi]_{C^{0,\alpha_\varphi}_t}$ as well as finally choosing the parameter $L_1$ by
\[
L_1=L_1(\eta)=C(\gamma,[\varphi]_{C^{0,1}_x})\eta^{1-\gamma}.
\]
By the comparison principle (Theorem~\ref{t:comparison-viscosity}), we have
\[
    u(x,t) - u(0,t_0) \leq \Psi_\eta(x,t) \quad \text{on} \quad B_{1/2} \times (t_0,0].
\]
In particular, we have, for any $t \in (t_0,0]$,
\[
    u(0,t) - u(0,t_0)  \leq \eta + CL_1^{p-1}(t-t_0) + A(t-t_0)^{\alpha_\varphi},
\]
for any given $\eta>0$. By the choice of $L_1$, we have
\[
    u(0,t) - u(0,t_0) \leq \eta + C\eta^{(1-\gamma)(p-1)}(t-t_0) + A(t-t_0)^{\alpha_\varphi}, 
\]
for any $t \in (t_0,0]$ and any $\eta \in (0,1)$. Since, for each fixed $t$, this estimate holds for all $\eta$, we can optimize in the parameter $\eta$ in the following way. If $\gamma=1$, then we can just take $\eta=0$. Otherwise, call
\[
    f(\eta) = \eta+c\eta^{(1-\gamma)(p-1)},
\]
with $c = C(t-t_0)$. We see that $f'(\eta)=0$ when
\[
    \eta = \left(c(\gamma-1)(p-1)\right)^\frac{1}{(\gamma-1)(p-1)+1},
\]
and so, we reach
\begin{align*}
    u(0,t) - u(0,t_0)& \leq C(t-t_0)^{\frac{1}{1+(p-1)(\gamma-1)}}+ A(t-t_0)^{\alpha_\varphi}\\
    &\leq C(t-t_0)^\alpha,
\end{align*}
as intended. The lower bound is obtained by considering $-\Psi_\eta$ and observing that it is a strict subsolution. It does not need to lie above the obstacle. 
\end{proof}

We now present the proof of Theorem \ref{t:Lipschitz}.

\begin{proof}[Proof of Theorem \ref{t:Lipschitz}]
Fix $(x_0,t_0)\in Q_{1/2}$. Let $\lambda>0$, to be chosen below, be small and set
\[
    \mathcal{M}_1\coloneqq C_1\mathcal M,
\]
where $\mathcal M$ is defined in \eqref{eq:constant-M} and
$C_1=C_1(d,s,p,\lambda)>0$ is chosen sufficiently large. Define
\[
    v(x,t)\coloneqq \mathcal{M}_1^{-1}
    u\!\left(\lambda x+x_0,\,
    \mathcal{M}_1^{\,2-p}\lambda^{sp}t+t_0\right).
\]
Then $v$ satisfies Assumption~\eqref{assumption:normalized-setting}. 

We now note that if $\lambda$ is chosen sufficiently small, in a universal way, then $v$ satisfies the same equation \eqref{eq:parabolic-obstacle-problem} in $Q_4$, with the obstacle replaced by 
$$\widetilde \varphi(x,t)\coloneqq \mathcal{M}_1^{-1}\varphi(\lambda x+x_0,\mathcal{M}_1^{2-p}\lambda^{sp}t+t_0).$$ 
Note that $[\widetilde\varphi]_{C_x^{0,1}}\leq [\varphi]_{C_x^{0,1}} $ and $[\widetilde\varphi]_{C^{0,\alpha_\varphi}_t}\leq [\varphi]_{C^{0,\alpha_\varphi}_t}$.

We apply Theorem \ref{t:lip-in-space_Chibrata} to obtain
\[
    |v(x,0)-v(y,0)|\le C|x-y|, \quad \text{for } x,y\in B_{1/2}.
\]
We now invoke Proposition \ref{p:time-regularity-estimate-phip-desc} to get
\[
    \sup_{x\in B_{1/4}}|v(x,t)-v(x,0)|\le C|t|^\alpha, \quad \text{for } t\in(-1/4,0],
\]
where $\alpha$ is as in Proposition \ref{p:time-regularity-estimate-phip-desc}. Combining the previous estimates, we conclude that
\[
    |v(x,t)-v(0,0)|\le C(|x|+|t|^\alpha), \quad \text{for } x\in B_{1/2}\text{ and } t\in(-1/4,0].
\]
Rescaling back to $u$ gives 
\[
    |u(x,t) - u(x_0,t_0)| \leq C\mathcal{M}_1\left(|x-x_0|+\mathcal{M}_1^{(p-2)\alpha}|t-t_0|^\alpha \right),
\]
for all
\[
    (x,t)\in B_\lambda(x_0)\times\bigl(t_0-\mathcal{M}_1^{2-p}\lambda^{sp},\,t_0\bigr].
\]
This proves the desired estimate in a (small) cylinder centered at $(x_0,t_0)$. By a covering argument, we obtain
\begin{align*}
    |u(x,t) - u(y,\tau)| & \leq C\mathcal{M}_1\left(|x-y|+\mathcal{M}_1^{(p-2)\alpha}|t-\tau|^\alpha \right),
\end{align*}
for any $(x,t), (y,\tau) \in Q_{1/2}$.
\end{proof}

\medskip

{\small \noindent{\bf Acknowledgments.} This publication is based upon work supported by King Abdullah University of Science and Technology (KAUST) under Award No. ORFS-CRG12-2024-6430.}

\end{document}